\newcommand{\wrlab}[1]{\label{#1}}
\newtheorem{thm}{THEOREM}[section]
\newtheorem{lem}[thm]{LEMMA} 
\newtheorem{cor}[thm]{COROLLARY} 
 \newtheorem{thm*}{THEOREM}[]
\newcommand{\tref}[1]{Theorem~\ref{#1}}
\newcommand{\cref}[1]{Corollary~\ref{#1}}
\newcommand{\lref}[1]{Lemma~\ref{#1}}
\def\N{{\mathbb N}} \def\Z{{\mathbb Z}} \def\Q{{\mathbb Q}}
\def\R{{\mathbb R}}
 \def\scra{{\mathcal A}} 
 \def\scrf{{\mathcal F}}
  \def\scrt{{\mathcal T}}
\def\bfzero{{\bf 0}}
\font\tenolde=eufm10 at 10pt
\font\sevenolde=eufm7
\font\fiveolde=eufm5
\def\dom{\hbox{dom\,}}
\def\im{\hbox{im\,}}
\def\exp{\hbox{exp}}
\def\Id{\hbox{Id}}
\begin{document}
\bibliographystyle{alpha}

\title[Real analytic counterexample]
{Real analytic counterexample to the freeness conjecture}

\keywords{prolongation, moving frame, dynamics}
\subjclass{57Sxx, 58A05, 58A20, 53A55}

\author{Scot Adams}
\address{School of Mathematics\\ University of Minnesota\\Minneapolis, MN 55455
\\ adams@math.umn.edu}

\date{September 4, 2015\qquad Printout date: \today}

\begin{abstract}
  We provide a counterexample to P.~Olver's freeness conjecture for
  $C^\omega$ transformations.
\end{abstract}

\maketitle

%\tableofcontents
 
%%%%%%%%%%%%%%%%%%%%%%%%%%%%%%%%%%%%%%%%%%%%%%%%%%%%%%%%%%%%%%%%%%%%%%%
%%%%%%%%%%%%%%%%%%%%%%%%%%%%%%%%%%%%%%%%%%%%%%%%%%%%%%%%%%%%%%%%%%%%%%%

\section{Introduction\wrlab{sect-intro}}

P.~Olver's freeness conjecture (in his words) asserts: ``If a Lie
group acts effectively on a manifold, then, for some $n<\infty$, the
action is free on [a nonempty] open subset of the jet bundle of order $n$.''
In this note, we provide a counterexample
to one interpretation of this conjecture:
In \tref{thm-ctrx}, we show that a $C^\omega$ counterexample exists
for the additive group $\Z$ of integers.
Using Lemma 17.3 of \cite{a1:cinftyfreeness},
and following the proof of Theorem 18.1 in \cite{a1:cinftyfreeness},
it is possible to induce that $\Z$-action to get a $C^\omega$~counterexample for any
connected Lie group with noncompact center.
Conversely, according to Theorem 19.1 of \cite{a1:cinftyfreeness},
the conjecture holds
(in its $C^\infty$ form and, consequently, in its $C^\omega$ form)
for any connected Lie group with compact center.

A $C^\infty$ counterexample for an action of $\Z$
can be found in the proof of Theorem 18.1 of \cite{a1:cinftyfreeness}.
In that counterexample, $\Z$ acts on $\R^4$, and it seems likely that
a $C^\omega$ counterexample could also be constructed on~$\R^4$, or, possibly,
even on a lower dimensional Euclidean space.
The best partial result I know of in this direction is \cite{morris:tosandp},
due to D.~Morris, and involves a $C^\omega$ action on $\R$
of the additive semigroup of positive integers.
However, the rigidity of $C^\omega$ actions makes this kind
of argument technically challenging.
In the proof of \tref{thm-ctrx},
we avoid many technical issues through the use of topology.
To wit, we construct a $C^\omega$~counterexample on a $3$-dimensional manifold
through a kind of dynamical surgery:
Following an iterative recipe, we form a subset $M$ of~$\R^3$,
and give it a locally Euclidean topology $\tau$.
This topology $\tau$ is chosen so as to reproduce the effect of gluing
certain pairs of submanifolds together,
and then passing to a fundamental domain.
The resulting topological space $(M,\tau)$
has infinitely generated fundamental group,
so is significantly more complicated, topologically,
than a contractible space like~$\R^4$.
Also constructed iteratively is
a maximal $C^\omega$ atlas $\scra$ on $(M,\tau)$,
along with a $C^\omega$ vector field $V$
on the $C^\omega$ manifold $(M,\tau,\scra)$.
The flow of $V$ is an $\R$-action that,
when restricted to $\Z$, provides the desired counterexample.

This writeup is not intended for publication.

\section{Miscellaneous notation and terminology\wrlab{sect-notation}}

Let $\N:=\{1,2,3,\ldots\}$.
For any sets $A,B$ and function $f:A\to B$,
let $\dom[f]:=A$ be the domain of $f$,
and let $\im[f]:=f(A)\subseteq B$ be the image of $f$.
For any set $A$, let $\Id_A:A\to A$ be the identity map on~$A$.

A subset of a topological space is {\bf meager}
(a.k.a.~{\bf of first category})
if it is a countable union of nowhere dense sets.
A subset of a topological space is {\bf nonmeager}
(a.k.a.~{\bf of second category}) if it is not meager.
A subset of a topological space is {\bf comeager}
(a.k.a.~{\bf residual}) if its complement is meager.
A subset of a topological space is {\bf locally closed}
if it is the intersection of an open set with a closed set.
A subset of a topological space is {\bf constructible}
if it is a finite union of locally closed sets.
The collection of constructible sets is exactly the
Boolean algebra of sets generated from the topology.

Let $M$ be a set, let $\tau$ be a topology on $M$
and let $M_0$ be a $\tau$-open subset of $M$.
We define $\tau|M_0:=\{U\in\tau\,|\,U\subseteq M_0\}$.
For any maximal $C^\omega$ atlas $\scra$ on $(M,\tau)$, we define
$\scra|M_0:=\{\phi\in\scra\,|\,\dom[\phi]\subseteq M_0\}$.

Let $M$ be a $C^\omega$ manifold.
Let $V$ be a complete $C^\omega$ vector field on~$M$.
For all $t\in\R$, we denote the time $t$ flow of $V$ by $\Phi_t^V:M\to M$.
By the Cauchy-Kowalevski Theorem,
$(\sigma,t)\mapsto\Phi_t^V(\sigma):M\times\R\to M$ is $C^\omega$.
For any $A\subseteq\R$, for any $B\subseteq M$, let $\Phi_A^V(B):=\{\Phi_a^V(b)\,|\,a\in A,b\in B\}$.
For any $A\subseteq\R$, for any $b\in M$, let $\Phi_A^V(b):=\{\Phi_a^V(b)\,|\,a\in A\}$.
For any~$a\in\R$, for any $B\subseteq M$, let $\Phi_a^V(B):=\{\Phi_a^V(b)\,|\,b\in B\}$.

Let $M$ be a $C^\omega$ manifold, let $V$ be a $C^\omega$ vector field on $M$,
let $k\ge0$ be an integer and let $\sigma\in M$.
We say $(V,\sigma)$ is {\bf periodic to order $k$} if, for some integer $T\ne0$,
the map $\Phi_T^V:M\to M$ agrees with the identity map
$\Id_M:M\to M$ to order $k$ at $\sigma$.

Let $\tau_\#$ denote the standard topology on $\R^3$.
For all $S\subseteq\R^3$, let $S^\circ$~be the $(\tau_\#)$-interior in $\R^3$ of $S$.
Let $\scra_\#$ denote the standard maximal $C^\omega$~atlas on $(\R^3,\tau_\#)$.
Let $E$ be the $C^\omega$ vector field on $(\R^3,\tau_\#,\scra_\#)$ represented by the constant map
$(x,y,z)\mapsto(1,0,0):\R^3\to\R^3$.
Then, for all $x,y,z,t\in\R$, we have $\Phi_t^E(x,y,z)=(x+t,y,z)$.
Define functions $\pi:\R^3\to\R$ and $\Pi:\R^3\to\R^2$ by
$\pi(x,y,z)=x$ and $\Pi(x,y,z)=(y,z)$.

A topology will be called a {\bf manifold topology} if it is
Hausdorff, second countable and locally Euclidean.
For any integer $N\ge1$, let
\begin{gather*}
L_N\,:=\,(-\infty,-N+1)\,\times\,\R^2,\qquad
R_N\,:=\,(N-1,\infty)\,\times\,\R^2\\
\hbox{and}\qquad S_N\,:=\,(-N,N)\,\times\,\R^2.
\end{gather*}
A {\bf displayed system} consists of
\begin{itemize}
\item an integer $N\ge1$,
\item a $(\tau_\#)$-constructible subset $M$ of $\R^3$,
\item a manifold topology $\tau$ on $M$,
\item a maximal $C^\omega$ atlas $\scra$ on $(M,\tau)$ \qquad\qquad and
\item a complete $C^\omega$ vector field $V$ on $(M,\tau,\scra)$
\end{itemize}
such that
\begin{itemize}
\item$M^\circ$ is $\tau$-open and $\tau$-dense in $M$,
\item$\tau\,|\,(M^\circ)\,\,=\,\,(\tau_\#)\,|\,(M^\circ)$,
\item$\scra\,|\,(M^\circ)\,\,=\,\,(\scra_\#)\,|\,(M^\circ)$,
\item$M\cap S_N$ is $\tau$-open in $M$,
\item$L_N\,\cup\,R_N\,\,\subseteq\,\,M^\circ$ \qquad\qquad\qquad and
\item there exists a function $\theta:M^\circ\to[0,1]$ such that
 \begin{itemize}
 \item[]$\theta=1$ on $L_N\cup R_N$ \quad\qquad and \quad\qquad $V=\theta E$ on $M^\circ$.
 \end{itemize}
\end{itemize}

Let $D=(N,M,\tau,\scra,V)$ be a displayed system.
We say that $\sigma\in M$ is {\bf$D$-flat} if there exists $v\in\R^2$ such that
$(-N,v),(N,v)\in\Phi_\R^V(\sigma)$.
Let $\scrf(D)$ denote the set of $D$-flat points in $M$.
We say that $D$ is {\bf generically flat} if $\scrf(D)$ is $\tau$-comeager in $M$.

Let $D=(N,M,\tau,\scra,V)$ and $D'=(N',M',\tau',\scra',V')$ be displayed systems.
We say $D'$ is an {\bf extension of $D$} if all of the following hold:
\begin{itemize}
\item$N'\,\,\ge\,\,N+1$,
\item$M\cap S_N$ is $\tau'$-open in $M'$,
\item$M'\cap S_N\quad=\quad M\cap S_N$,
\item$\tau'\,|\,(M\cap S_N)\quad=\quad\tau\,|\,(M\cap S_N)$,
\item$\scra'\,|\,(M\cap S_N)\quad=\quad\scra\,|\,(M\cap S_N)$ \qquad\qquad and
\item$V'\,|\,(M\cap S_N)\quad=\quad V\,|\,(M\cap S_N)$.
\end{itemize}

\section{Coincidence of jets\wrlab{sect-facts-real-an}}

\begin{lem}\wrlab{lem-match-jet}
Let $d\ge1$ and $k\ge0$ be integers.
Let $w_0\in\R^d$ and let $W$~be an open neighborhood in $\R^d$ of $w_0$.
Let $\lambda:W\to\R$ be $C^k$
and assume that $2\le\lambda(w_0)\le3$.
Then there exists a $C^\omega$ function
$\lambda_0:\R^d\to(1,4)$
such that $\lambda_0$ agrees with $\lambda$ to order $k$ at $w_0$.
\end{lem}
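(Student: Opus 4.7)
The plan is to take the degree-$k$ Taylor polynomial $P$ of $\lambda$ at $w_0$, which is a $C^\omega$ function on all of $\R^d$ matching the $k$-jet of $\lambda$, and then post-compose with a bounded $C^\omega$ ``sigmoid'' whose own $k$-jet at $\lambda(w_0)$ is the identity. Post-composition by such a sigmoid does not spoil the $k$-jet at $w_0$, but it does force the image into $(1,4)$.

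In more detail, set $c := \lambda(w_0) \in [2,3]$, so $(c-1, c+1) \subseteq (1,4)$. I would reduce the lemma to the construction of a $C^\omega$ map $\sigma: \R \to (c-1, c+1)$ that agrees with the identity to order $k$ at $c$, i.e.\ $\sigma(t)-t = O((t-c)^{k+1})$ near $c$. Given such a $\sigma$, set $\lambda_0 := \sigma \circ P$: this is $C^\omega$ on $\R^d$, takes values in $(c-1,c+1)\subseteq(1,4)$, and satisfies
\[ \lambda_0(w)-\lambda(w) \;=\; (\sigma(P(w))-P(w)) + (P(w)-\lambda(w)) \;=\; o(|w-w_0|^k), \]
because $P(w)-c = O(|w-w_0|)$ makes the first summand $O(|w-w_0|^{k+1})$ and the second is the Taylor remainder for the $C^k$ function $\lambda$ at $w_0$.

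The one step that needs a small trick is the production of $\sigma$, since real-analytic functions on $\R$ admit no compactly supported perturbations, so one cannot simply ``cut off'' a polynomial. Instead I would exploit an analytic homeomorphism of $\R$ onto a bounded interval: let $r$ be the degree-$k$ Taylor polynomial at $0$ of $u \mapsto \tan(\pi u/2)$, and define
\[ \psi(u) := \tfrac{2}{\pi}\arctan(r(u)), \qquad \sigma(t) := c + \psi(t-c). \]
Then $\psi:\R \to (-1,1)$ is $C^\omega$, and since $\arctan(\tan(\pi u/2)) = \pi u/2$ on $|u|<1$, the bound $r(u) - \tan(\pi u/2) = O(u^{k+1})$ together with the boundedness of $\arctan'$ yields $\psi(u) = u + O(u^{k+1})$, giving the required $k$-jet match.

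The main obstacle, if there is one, is precisely this construction of a globally bounded $C^\omega$ function on $\R$ whose $k$-jet at a single prescribed point equals that of a given polynomial (here, the identity near $c$). Any $C^\omega$ homeomorphism of $\R$ onto a bounded interval serves the purpose, e.g.\ $\tanh$ in place of $(2/\pi)\arctan$; the mechanism is to invert such a homeomorphism symbolically via a Taylor polynomial and then compose, so that the outer map enforces boundedness while the inner polynomial restores the correct jet.
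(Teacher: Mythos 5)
Your proof is correct, and the mechanism is genuinely different from the paper's. Both arguments begin by taking a polynomial $P$ matching the $k$-jet of $\lambda$ at $w_0$, and both then face the same obstacle — no compactly supported real-analytic cutoffs — but resolve it by different means. The paper normalizes so that $P(w_0)\in[0,1]$ and then \emph{multiplies} by the damping factor $e^{-aQ}$, where $Q(x)=\sum x_i^{2k+2}$ vanishes to order $2k+1$ at the basepoint; taking $a$ large crushes $Pe^{-aQ}$ into a bounded strip while the high vanishing order of $Q$ protects the $k$-jet. You instead \emph{post-compose} $P$ with a bounded $C^\omega$ sigmoid $\sigma:\R\to(c-1,c+1)$ whose $k$-jet at $c=\lambda(w_0)$ is the identity, built from $\arctan$ applied to a truncated Taylor series of $\tan$. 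The multiplicative route is shorter and keeps everything polynomial-times-Gaussian; it needs a one-parameter family $\mu_a$ and a compactness/decay argument to pick $a_0$. The compositional route is slightly more elaborate (one must justify that post-composing with an order-$k$ identity preserves the $k$-jet, and that $\arctan\circ r$ matches $\mathrm{id}$ to order $k$), but it gives a clean, parameter-free formula $\lambda_0=\sigma\circ P$ and isolates a reusable gadget — a bounded analytic map of $\R$ with prescribed finite jet at a point — that could serve in similar cutoff-free constructions. Both are valid; they are two distinct analytic substitutes for the $C^\infty$ bump function.
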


\begin{proof}
Let $\bfzero:=(0,\ldots,0)\in\R^d$.
We assume, without loss, that $w_0=\bfzero$.
Define $\mu:=\lambda-2$.
Then $0\le\mu(\bfzero)\le1$.
Let $P:\R^d\to\R$ be a polynomial such that $P$ agrees with $\mu$ to order $k$ at $\bfzero$.
Define $Q:\R^d\to\R$ by $Q(x_1,\ldots,x_d)=x_1^{2k+2}+\ldots+x_d^{2k+2}$.
Then $Q\ge0$.
Also, $Q$ vanishes to order $2k+1$ at $\bfzero$.
For all $a>0$, let $\mu_a:=Pe^{-aQ}:\R^d\to\R$;
then $\mu_a$~agrees with $P$ to order $2k+1$ at $\bfzero$,
so $\mu_a$ agrees with $\mu$ to order $k$ at $\bfzero$.
We have $P(\bfzero)=\mu(\bfzero)$, so $0\le P(\bfzero)\le1$.
Choose $a_0>0$ so large that $-1<\mu_{a_0}<2$.
Let $\lambda_0:=2+\mu_{a_0}$.
\end{proof}

\section{Facts about displayed systems\wrlab{sect-facts-disp-syst}}

Let $D=(N,M,\tau,\scra,V)$ be a displayed system.

\begin{lem}\wrlab{lem-unambig-nw-dense}
Let $Z\subseteq M$. Then
\begin{itemize}
\item[]$Z$ is $\tau$-nowhere dense \qquad \hbox{iff} \qquad $Z$ is $(\tau_\#)$-nowhere dense.
\end{itemize}
\end{lem}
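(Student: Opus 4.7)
The plan is to funnel both sides of the claimed biconditional through a common intermediate condition, namely that $Z\cap M^\circ$ be nowhere dense in $M^\circ$. Since the displayed-system axiom $\tau|(M^\circ) = (\tau_\#)|(M^\circ)$ makes the two candidate topologies coincide on $M^\circ$, this intermediate condition is unambiguous, and its role is to absorb both notions of nowhere density on the open dense subset where everything agrees.

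The key auxiliary fact I need is that $M\setminus M^\circ$ is $(\tau_\#)$-nowhere dense in $\R^3$; this is where the $(\tau_\#)$-constructibility of $M$ enters. Writing $M = L_1\cup\cdots\cup L_k$ with each $L_i$ locally closed, say $L_i = U_i\cap C_i$ for $U_i$ open and $C_i$ closed in $\R^3$, one computes $L_i^\circ = U_i\cap C_i^\circ$ and $\overline{L_i}\subseteq\overline{U_i}\cap C_i$, whence the $\R^3$-boundary $\partial L_i$ is contained in $\partial U_i\cup\partial C_i$, a union of two $(\tau_\#)$-nowhere-dense sets. Applying $\partial(A\cup B)\subseteq\partial A\cup\partial B$ inductively yields that $\partial M$ is $(\tau_\#)$-nowhere dense in $\R^3$, and $M\setminus M^\circ\subseteq\partial M$ inherits the property. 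I expect this boundary computation to be the main (though routine) obstacle.

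Next I establish the first reduction: because $M^\circ$ is $\tau$-open and $\tau$-dense in $M$, a subset $A\subseteq M$ is $\tau$-nowhere dense in $M$ iff $A\cap M^\circ$ is $\tau$-nowhere dense in $M^\circ$. The forward direction observes that the $\tau$-interior of $\overline{A\cap M^\circ}^\tau\cap M^\circ$ sits inside the (empty) $\tau$-interior of $\overline{A}^\tau$, and that interiors of subsets of $M^\circ$ are computed identically in $M^\circ$ and in $M$ since $M^\circ$ is $\tau$-open. The backward direction observes that any nonempty $\tau$-open $U\subseteq M$ meets $M^\circ$ by $\tau$-density, and a nonempty $\tau|(M^\circ)$-open subset of $U\cap M^\circ$ disjoint from $A\cap M^\circ$ (which the hypothesis provides) is $\tau$-open in $M$ and disjoint from $A$. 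Invoking $\tau|(M^\circ) = (\tau_\#)|(M^\circ)$, this becomes: $Z$ is $\tau$-nowhere dense in $M$ iff $Z\cap M^\circ$ is $(\tau_\#)$-nowhere dense in $M^\circ$.

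Finally I use two further standard facts: (i) since $M^\circ$ is $(\tau_\#)$-open in $\R^3$, a subset $A\subseteq M^\circ$ is $(\tau_\#)$-nowhere dense in $M^\circ$ iff $(\tau_\#)$-nowhere dense in $\R^3$; and (ii) by the boundary fact of the second paragraph, $Z\setminus M^\circ$ is $(\tau_\#)$-nowhere dense in $\R^3$, so $Z$ is $(\tau_\#)$-nowhere dense in $\R^3$ iff $Z\cap M^\circ$ is. Chaining these equivalences with the conclusion of the previous paragraph produces the lemma.
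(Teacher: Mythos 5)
Your proposal is correct and follows essentially the same route as the paper: both funnel the biconditional through the intermediate set $Z\cap M^\circ$, using that $M\setminus M^\circ$ is $\tau$-nowhere dense (from $M^\circ$ being $\tau$-open and $\tau$-dense) on one side, $(\tau_\#)$-nowhere dense (from constructibility of $M$) on the other, and that the two topologies agree on $M^\circ$. The only difference is that you supply the boundary computation for constructible sets and the open-dense transfer argument, both of which the paper's proof states without proof.
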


\begin{proof}
Let $Z'=Z\cap(M^\circ)$.
Then $\,\,Z'\,\,\subseteq\,\,Z\,\,\subseteq\,\,[Z']\cup[M\backslash(M^\circ)]$.

Since $M^\circ$~is $\tau$-open and $\tau$-dense in $M$,
it follows that $M\backslash(M^\circ)$ is $\tau$-nowhere dense in $M$.
Then
\begin{itemize}
\item[(a)]$Z$ is $\tau$-nowhere dense \qquad iff \qquad $Z'$ is $\tau$-nowhere dense.
\end{itemize}
Since $Z'\subseteq M^\circ$ and since $\tau|(M^\circ)=(\tau_\#)|(M^\circ)$, it follows that
\begin{itemize}
\item[(b)]$Z'$ is $\tau$-nowhere dense \qquad iff \qquad $Z'$ is $(\tau_\#)$-nowhere dense.
\end{itemize}
Since $M$ is a $(\tau_\#)$-constructible subset of $\R^3$,
it follows that $M\backslash(M^\circ)$~is $(\tau_\#)$-nowhere dense in $\R^3$.
Then
\begin{itemize}
\item[(c)]$Z$ is $(\tau_\#)$-nowhere dense \qquad iff \qquad $Z'$ is $(\tau_\#)$-nowhere dense.
\end{itemize}
The result now follows from (a), (b) and (c).
\end{proof}

\begin{cor}\wrlab{cor-unambig-nw-dense}
Let $Z\subseteq M$. Then
\begin{itemize}
\item[]$Z$ is $\tau$-meager \qquad iff \qquad $Z$ is $(\tau_\#)$-meager.
\end{itemize}
\end{cor}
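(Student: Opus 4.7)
The plan is to deduce the corollary directly from \lref{lem-unambig-nw-dense} by unpacking the definition of meager as a countable union of nowhere dense sets. Since \lref{lem-unambig-nw-dense} establishes that, for subsets $Z \subseteq M$, the notions of $\tau$-nowhere dense and $(\tau_\#)$-nowhere dense coincide, and since meagerness is built from nowhere denseness via countable union (an operation that commutes with the equivalence), the two meagerness notions must coincide as well.

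More concretely, I would argue both implications by the same pattern. Assume first that $Z$ is $\tau$-meager. Then by definition there exist $\tau$-nowhere dense sets $Z_1, Z_2, \ldots \subseteq M$ with $Z = \bigcup_{n \in \N} Z_n$. For each $n$, since $Z_n \subseteq Z \subseteq M$, \lref{lem-unambig-nw-dense} applies and tells us that $Z_n$ is $(\tau_\#)$-nowhere dense. Hence $Z$ is a countable union of $(\tau_\#)$-nowhere dense sets, i.e., $Z$ is $(\tau_\#)$-meager. For the converse, assume $Z$ is $(\tau_\#)$-meager, write $Z = \bigcup_{n \in \N} Z_n$ with each $Z_n \subseteq M$ being $(\tau_\#)$-nowhere dense, and again invoke \lref{lem-unambig-nw-dense} on each $Z_n$ to conclude that each $Z_n$ is $\tau$-nowhere dense, whence $Z$ is $\tau$-meager.

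There is essentially no obstacle here: the only minor bookkeeping point is to ensure that when we decompose $Z$ as a countable union of nowhere dense sets $Z_n$, those $Z_n$ are indeed subsets of $M$ (so that \lref{lem-unambig-nw-dense} is applicable). But this is automatic, since one may intersect any nowhere dense decomposition with $M$ without enlarging it, and the $Z_n \subseteq Z \subseteq M$ condition is already built in. Thus the corollary reduces to a one-line application of the lemma inside a countable union.
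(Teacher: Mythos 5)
Your proposal is correct and matches the (implicit) argument in the paper: the corollary is stated with no proof precisely because it follows from \lref{lem-unambig-nw-dense} by applying the lemma term-by-term to a countable decomposition into nowhere dense sets. Your bookkeeping remark — that the paper's definition of meager (``$Z$ \emph{is} a countable union of nowhere dense sets'') already forces each $Z_n\subseteq Z\subseteq M$, so the lemma applies directly to each $Z_n$ — is exactly the right observation, and the argument is sound in both directions.
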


\begin{lem}\wrlab{lem-one-start-end}
Let $\sigma\in M$. Then both of the following are true:
\begin{itemize}
\item[(i)]The set $[\,\Phi_\R^V(\sigma)\,]\,\,\cap\,\,[\,\{-N\}\times\R^2\,]$ has at most one element.
\item[(ii)]The set $[\,\Phi_\R^V(\sigma)\,]\,\,\cap\,\,[\,\{N\}\times\R^2\,]$ has at most one element.
\end{itemize}
\end{lem}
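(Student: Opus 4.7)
The plan is to prove (i) by contradiction, with (ii) following by the analogous argument using $R_N$ and $\{N\}\times\R^2$ in place of $L_N$ and $\{-N\}\times\R^2$. Suppose $p,q$ are two distinct points of $[\Phi_\R^V(\sigma)]\cap[\{-N\}\times\R^2]$; choose $t_0\in\R\setminus\{0\}$ with $\Phi_{t_0}^V(p)=q$, and assume without loss that $t_0>0$. Write $\gamma(t):=\Phi_t^V(p)$.

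The decisive hypothesis is that $V=\theta E$ on $M^\circ$ with $\theta\equiv 1$ on $L_N\cup R_N$. Since $-N<-N+1$, the slice $\{-N\}\times\R^2$ sits inside $L_N\subseteq M^\circ$. Because $\tau\,|\,(M^\circ)=(\tau_\#)\,|\,(M^\circ)$ and $L_N$ is $\tau_\#$-open in $\R^3$, the set $L_N$ is $\tau$-open in $M$. Hence there is an open interval around $0$ on which $\gamma$ stays in $L_N$; on that interval $V(\gamma(t))=E(\gamma(t))$, so $\gamma(t)=p+(t,0,0)$, and in particular $\pi(\gamma(t))=-N+t>-N$ for all sufficiently small $t>0$. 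Now set $T:=\inf\{t\in(0,t_0]\,:\,\pi(\gamma(t))\le-N\}$. This set is nonempty (it contains $t_0$) and, by the previous sentence, is bounded below by some $\epsilon>0$, so it is closed in $\R$ and the infimum is attained. By continuity of $\pi\circ\gamma$, combined with the strict inequality $\pi(\gamma(t))>-N$ for $t\in(0,T)$, one deduces $\pi(\gamma(T))=-N$, so $\gamma(T)\in\{-N\}\times\R^2\subseteq L_N$. Reapplying the local translation argument at $\gamma(T)$ gives $\gamma(t)=\gamma(T)+((t-T),0,0)$ for $t$ in a neighborhood of $T$, and hence $\pi(\gamma(t))<-N$ for $t$ slightly less than $T$, contradicting the definition of $T$.

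The main (and essentially only) obstacle is organizing the infimum argument cleanly: one needs both that the orbit leaves the slice $\pi=-N$ for small positive time (so $T>0$) and that the same local translation formula can be re-applied at $\gamma(T)$ (which forces $\gamma(T)\in L_N$). Both follow at once from the containments $\{-N\}\times\R^2\subseteq L_N\subseteq M^\circ$ together with $V=E$ on $L_N$. Statement (ii) is proved by the identical argument with $L_N$ replaced by $R_N$ and $-N$ by $N$, using $N-1<N$ to conclude $\{N\}\times\R^2\subseteq R_N$.
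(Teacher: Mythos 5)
There is a genuine gap at the infimum step. You define $T := \inf\{t \in (0, t_0] : \pi(\gamma(t)) \le -N\}$ and deduce that this set is closed in $\R$, that the infimum is attained, and that $\pi(\gamma(T)) = -N$, each time invoking ``continuity of $\pi \circ \gamma$.'' However, $\pi : (M,\tau) \to \R$ is not continuous in general: the displayed-system hypotheses only force $\tau|(M^\circ) = (\tau_\#)|(M^\circ)$, and on $M \setminus M^\circ$ the topology $\tau$ may differ drastically from the subspace topology $(\tau_\#)|M$. Indeed, in the construction of $D'$ in \lref{lem-iteration-lem}, the chart $\rho_3$ $\tau'$-glues a sequence of points with $x$-coordinate tending to $N$ from below onto the point $(N+1,v)$, so $\pi$ is $\tau'$-discontinuous at $(N+1,v) \in M' \setminus (M')^\circ$. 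Since the $V$-orbit $\gamma$ may well pass through $M \setminus M^\circ$ during $(0,t_0)$, you cannot conclude that $\pi \circ \gamma$ is continuous there; hence neither the closedness of $\{t \in (0,t_0] : \pi(\gamma(t)) \le -N\}$ nor the equality $\pi(\gamma(T)) = -N$ is justified, and the first-return argument collapses.

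The paper's proof sidesteps this entirely by flowing toward infinity rather than seeking a first return, and so it never leaves the region where the two topologies agree. For (i): flowing backward from any point of $\{-N\}\times\R^2 \subseteq L_N$, one stays in $L_N \subseteq M^\circ$ for all negative time, since $V = E$ on $L_N$ and $L_N$ is invariant under the backward $E$-flow. Thus if $(-N,w')$ and $(-N,w'')$ lie on $\Phi_\R^V(\sigma)$ at times $t'$ and $t''$, then for any fixed $t \le \min(t',t'')$ one computes $\Pi(\Phi_t^V(\sigma)) = w'$ and also $\Pi(\Phi_t^V(\sigma)) = w''$, forcing $w' = w''$. (The paper writes out the mirror argument for (ii), using forward invariance of $R_N$.) Because this comparison happens entirely inside $L_N$ (respectively $R_N$), which is contained in $M^\circ$ where $\tau = \tau_\#$, continuity of $\pi$ is automatic and no intermediate-value reasoning is needed.
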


\begin{proof}
We prove only (ii); the proof of (i) is similar.
Let $w',w''\in\R^2$ and assume that $(N,w'),(N,w'')\in\Phi_\R^V(\sigma)$.
We wish to show that $w'=w''$.

Choose $t',t''\in\R$ such that $(N,w')=\Phi_{t'}^V(\sigma)$ and $(N,w'')=\Phi_{t''}^V(\sigma)$.
Fix $t\in\R$ such that $t\ge t'$ and $t\ge t''$.
Since $V=E$ on $R_N$,
we get both $\Pi(\Phi_t^V(\sigma))=w'$ and $\Pi(\Phi_t^V(\sigma))=w''$.
Then $w'=w''$, as desired.
\end{proof}

\begin{lem}\wrlab{lem-Q-facts}
Let $Q:=M\cap S_N$. Then all of the following are true:
\begin{itemize}
\item[(i)]$Q^\circ=Q\cap(M^\circ)$.
\item[(ii)]$Q^\circ$ is $\tau$-open and $\tau$-dense in $Q$.
\item[(iii)]$Q^\circ$ is $\tau$-open in $M$.
\item[(iv)]$\tau|(Q^\circ)=(\tau_\#)|(Q^\circ)$.
\end{itemize}
\end{lem}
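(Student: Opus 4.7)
The plan is to handle (i) by a short point-set computation exploiting that $S_N$ is $\tau_\#$-open in $\R^3$; from (i) the remaining parts fall out almost mechanically, the displayed-system axioms carrying all the weight. There is no genuine obstacle; the only care needed is in distinguishing the interior $Q^\circ$, which is defined with respect to $\tau_\#$ on $\R^3$, from openness with respect to $\tau$ on $M$, and in checking that the restriction notation $\tau|M_0$, $\tau_\#|M_0$ is being applied to sets that really are open in the ambient topology.

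For (i), I would invoke the general fact that $(A\cap S)^\circ = A^\circ\cap S$ whenever $S$ is open: the inclusion $A^\circ\cap S\subseteq (A\cap S)^\circ$ is because $A^\circ\cap S$ is open and sits inside $A\cap S$, and the reverse is because $(A\cap S)^\circ$ is an open subset of both $A$ and $S$, hence of $A^\circ\cap S$. Taking $A=M$ and $S=S_N$ yields $Q^\circ = M^\circ\cap S_N = M^\circ\cap(M\cap S_N) = Q\cap(M^\circ)$.

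For (iii), part (i) rewrites $Q^\circ$ as $M^\circ\cap Q$, and the displayed-system axioms give that both $M^\circ$ and $Q=M\cap S_N$ are $\tau$-open in $M$, so their intersection is as well. The $\tau$-open half of (ii) is then immediate since $Q^\circ\subseteq Q$. For the $\tau$-density half, let $U$ be any nonempty $\tau$-open subset of $Q$; because $Q$ is $\tau$-open in $M$, $U$ is also $\tau$-open in $M$, and $\tau$-density of $M^\circ$ in $M$ forces $U\cap(M^\circ)\ne\emptyset$, but $U\cap(M^\circ)\subseteq Q\cap(M^\circ)=Q^\circ$.

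For (iv), part (i) gives $Q^\circ\subseteq M^\circ$, part (iii) gives that $Q^\circ$ is $\tau$-open in $M$ (hence in $M^\circ$), and $Q^\circ$ is $\tau_\#$-open in $\R^3$ by construction (hence in $M^\circ$ as well). The displayed-system axiom $\tau|(M^\circ)=(\tau_\#)|(M^\circ)$ then restricts to the common open subset $Q^\circ$ to give $\tau|(Q^\circ)=(\tau_\#)|(Q^\circ)$. The whole argument is essentially bookkeeping; the one place I would slow down is at the final restriction step in (iv), just to confirm that all four restriction symbols $\tau|M^\circ$, $\tau_\#|M^\circ$, $\tau|Q^\circ$, $\tau_\#|Q^\circ$ are being invoked on sets verified to be open in the ambient topology, so that the notation from Section~\ref{sect-notation} actually applies.
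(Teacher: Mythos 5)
Your proof is correct and follows essentially the same route as the paper: the identity $(M\cap S_N)^\circ = M^\circ\cap S_N$ for (i), the displayed-system axioms that $Q$ and $M^\circ$ are $\tau$-open with $M^\circ$ $\tau$-dense for (ii)--(iii), and restriction of the axiom $\tau|(M^\circ)=(\tau_\#)|(M^\circ)$ to $Q^\circ\subseteq M^\circ$ for (iv). The only cosmetic difference is that you establish (iii) before the density half of (ii), whereas the paper derives (iii) from (ii); the underlying reasoning is identical.
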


\begin{proof}
Since $S_N$ is $(\tau_\#)$-open in $\R^3$,
we get $(M\cap S_N)^\circ=(M^\circ)\cap S_N$.
Since $M^\circ\subseteq M$, we have $M^\circ=(M^\circ)\cap M$.
Then
$$Q^\circ\,=\,(M\cap S_N)^\circ\,=\,(M^\circ)\cap S_N\,=\,(M^\circ)\cap M\cap S_N\,=\,(M^\circ)\cap Q,$$
proving (i).
From the definition of displayed system,
we know both that $Q$~is $\tau$-open in $M$
and that $M^\circ$~is $\tau$-open and $\tau$-dense in $M$.
Then the intersection $Q\cap(M^\circ)$ is $\tau$-open and $\tau$-dense in $Q$,
and so (ii) follows from (i).
By (ii), $Q^\circ$~is $\tau$-open in $Q$, so, as $Q$ is $\tau$-open in $M$,
it follows that $Q^\circ$~is $\tau$-open in $M$, proving (iii).
From the definition of displayed system,
we know that $\tau|(M^\circ)=(\tau_\#)|(M^\circ)$.
So, because $Q^\circ\subseteq M^\circ$,
we conclude that $\tau|(Q^\circ)=(\tau_\#)|(Q^\circ)$, proving (iv).
\end{proof}

\begin{lem}\wrlab{lem-flatpt-density}
Let $Q:=M\cap S_N$.
Assume that $D$ is generically flat.
Then $[\scrf(D)]\cap[Q^\circ]$
is $(\tau_\#)$-dense in $Q^\circ$.
\end{lem}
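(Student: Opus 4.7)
The plan is to unpack definitions and appeal to the Baire category theorem in the open set $Q^\circ \subseteq \R^3$. The generic flatness hypothesis says $\scrf(D)$ is $\tau$-comeager in $M$, equivalently $M \setminus \scrf(D)$ is $\tau$-meager in $M$. I would apply \cref{cor-unambig-nw-dense} (together with \lref{lem-unambig-nw-dense}) to transfer this: $M \setminus \scrf(D)$ is also $(\tau_\#)$-meager, i.e., a countable union of sets that are $(\tau_\#)$-nowhere dense in $\R^3$.

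Next, I would cut down to $Q^\circ$. By \lref{lem-Q-facts}(iii) and the definition of $S^\circ$, $Q^\circ$ is a $(\tau_\#)$-open subset of $\R^3$. The inclusion
$$Q^\circ \setminus \scrf(D) \,\,\subseteq\,\, M \setminus \scrf(D)$$
shows that $Q^\circ \setminus \scrf(D)$ is $(\tau_\#)$-meager in $\R^3$. A routine check (using that intersecting a $(\tau_\#)$-nowhere dense subset of $\R^3$ with the open set $Q^\circ$ yields a set that is nowhere dense in the subspace $Q^\circ$) then gives that $Q^\circ \setminus \scrf(D)$ is meager in the Baire space $Q^\circ$.

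Finally, since $Q^\circ$, being an open subset of $\R^3$, is a Baire space, the complement of any meager subset is dense. Applying this to the meager set $Q^\circ \setminus \scrf(D)$ inside $Q^\circ$ yields that $[\scrf(D)] \cap [Q^\circ]$ is $(\tau_\#)$-dense in $Q^\circ$ (the case $Q^\circ = \emptyset$ being vacuous). There is really no serious obstacle: the only bookkeeping point is making sure that "meager" is being interpreted consistently when passing between $M$, $\R^3$, and the subspace $Q^\circ$, and this is handled by \cref{cor-unambig-nw-dense} and the openness of $Q^\circ$ in $\R^3$.
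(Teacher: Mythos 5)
Your proof is correct, but it takes a slightly different route from the paper. You transfer globally first and then localize: you invoke \cref{cor-unambig-nw-dense} to convert the $\tau$-meagerness of $M\setminus\scrf(D)$ into $(\tau_\#)$-meagerness in $\R^3$, and then restrict to the $(\tau_\#)$-open set $Q^\circ$, which costs you the ``routine check'' that a $(\tau_\#)$-meager subset of $\R^3$ remains meager in the open subspace $Q^\circ$. The paper instead localizes first and then transfers: it uses \lref{lem-Q-facts}(iii) to view $Q^\circ$ as a $\tau$-open subset of $M$, so that $\tau$-comeagerness of $\scrf(D)$ in $M$ immediately gives $\tau$-comeagerness of $[\scrf(D)]\cap[Q^\circ]$ in $Q^\circ$, and only then invokes \lref{lem-Q-facts}(iv), the equality $\tau|(Q^\circ)=(\tau_\#)|(Q^\circ)$, to read this off as $(\tau_\#)$-comeagerness in $Q^\circ$ before applying Baire. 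The paper's order avoids \cref{cor-unambig-nw-dense} entirely and sidesteps the subspace bookkeeping, since the relevant topology identification is made exactly on the set $Q^\circ$ where Baire is applied; your version is a bit more general-purpose in that it shows the full complement is small globally, at the price of one extra restriction step. Both arguments are sound.
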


\begin{proof}
By \lref{lem-Q-facts}(iii--iv),
$Q^\circ$ is $\tau$-open in $M$ and
$\tau|(Q^\circ)\!=\!(\tau_\#)|(Q^\circ)$.

Since $D$ is generically flat, it follows that
$\scrf(D)$ is $\tau$-comeager in~$M$,
so $[\scrf(D)]\cap[Q^\circ]$ is $\tau$-comeager in $Q^\circ$.
So, because $\tau|(Q^\circ)=(\tau_\#)|(Q^\circ)$,
we see that $[\scrf(D)]\cap[Q^\circ]$ is $(\tau_\#)$-comeager in $Q^\circ$.
Then, by the Baire Category Theorem,
$[\scrf(D)]\cap[Q^\circ]$ is $(\tau_\#)$-dense in $Q^\circ$.
\end{proof}

\begin{lem}\wrlab{lem-periodic-contained}
Let $\sigma\in M$ and assume, for some $T\in\R\backslash\{0\}$,
that $\Phi_T^V(\sigma)=\sigma$.
Then $\Phi_\R^V(\sigma)\subseteq S_N$.
\end{lem}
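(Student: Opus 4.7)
The plan is to show directly that no point of $\Phi_\R^V(\sigma)$ can lie in $L_N\cup R_N$. Once this is established, the conclusion follows immediately: since $\R^3\,=\,L_N\,\cup\,([-N+1,N-1]\times\R^2)\,\cup\,R_N$ and (using $N\ge 1$) $[-N+1,N-1]\subseteq(-N,N)$, we get $\R^3\setminus(L_N\cup R_N)\subseteq S_N$, forcing $\Phi_\R^V(\sigma)\subseteq S_N$. Without loss of generality I take $T>0$ (replacing $T$ by $-T$ if needed; note $\Phi_T^V(\sigma)=\sigma$ implies $\Phi_{-T}^V(\sigma)=\sigma$). Because the flow commutes with itself, every $\rho=\Phi_s^V(\sigma)\in\Phi_\R^V(\sigma)$ is also $T$-periodic: $\Phi_T^V(\rho)=\rho$.

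Next I rule out $\rho\in R_N$. Since $R_N\subseteq M^\circ$ and $V=\theta E$ on $M^\circ$ with $\theta\equiv 1$ on $L_N\cup R_N$, the vector fields $V$ and $E$ agree identically on $R_N$. The set $R_N=(N-1,\infty)\times\R^2$ is forward-invariant under the $E$-flow $(x,y,z)\mapsto(x+t,y,z)$, and I expect this to imply $\Phi_t^V(\rho)=\Phi_t^E(\rho)=\rho+(t,0,0)$ for every $t\ge 0$. Applied with $t=T$ this gives $\rho=\Phi_T^V(\rho)=\rho+(T,0,0)$, forcing $T=0$, a contradiction. The case $\rho\in L_N$ is handled symmetrically: $L_N=(-\infty,-N+1)\times\R^2$ is backward-invariant under $E$, and $V=E$ on $L_N$, so by the same reasoning $\Phi_{-t}^V(\rho)=\rho+(-t,0,0)$ for all $t\ge 0$; applying with $t=T$ (and using $\Phi_{-T}^V(\rho)=\rho$, which follows from $\Phi_T^V(\rho)=\rho$) again yields $T=0$.

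The one step I expect to require care, and which is the main technical obstacle, is the invariance claim used above. I would prove it as follows. Fix $\rho\in R_N$ and set $\tau_*:=\sup\{t\ge 0\,|\,\Phi_s^V(\rho)\in R_N\text{ for all }s\in[0,t]\}$. On $[0,\tau_*)$ both curves $s\mapsto\Phi_s^V(\rho)$ and $s\mapsto\rho+(s,0,0)$ stay inside $R_N$ and satisfy the same ODE $\dot\gamma=V(\gamma)=E(\gamma)$ with the same initial condition, so by uniqueness they coincide. Since $\pi(\rho)>N-1$, the point $\rho+(s,0,0)$ lies in $R_N$ for every $s\ge 0$; if $\tau_*<\infty$, then by continuity of the flow $\Phi_{\tau_*}^V(\rho)=\rho+(\tau_*,0,0)\in R_N$, and, $R_N$ being $(\tau_\#)$-open, the flow would remain in $R_N$ past $\tau_*$, contradicting the definition of $\tau_*$. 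Hence $\tau_*=\infty$, justifying the identity $\Phi_t^V(\rho)=\rho+(t,0,0)$ for all $t\ge 0$ used in the main argument; the backward version on $L_N$ is analogous.
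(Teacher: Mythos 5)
Your proposal is correct and follows essentially the same route as the paper: the paper's proof consists of the single observation that, since $V=E$ on $L_N\cup R_N$, a periodic orbit cannot meet $L_N\cup R_N$, and then the set inclusion $\R^3\setminus(L_N\cup R_N)\subseteq S_N$. You have simply unpacked the first claim — the forward-invariance of $R_N$ (and backward-invariance of $L_N$) under the flow, leading to the contradiction $T=0$ — which the paper leaves implicit.
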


\begin{proof}
Since $V=E$ on $L_N\cup R_N$,
we see that $[\Phi_\R^V(\sigma)]\cap[L_N\cup R_N]=\emptyset$.
Then $\Phi_\R^V(\sigma)\subseteq[\R^3]\backslash[L_N\cup R_N]\subseteq S_N$.
\end{proof}

\section{The iteration\wrlab{sect-iteration}}

There is a graphic at the end of the paper, following the
references, which should help the reader in understanding the
following lemma.

\begin{lem}\wrlab{lem-iteration-lem}
Let $D=(N,M,\tau,\scra,V)$ be a generically flat displayed system.
Let $\sigma_0\in\scrf(D)$.
Let $k\ge1$ be an integer.
Then there exists a displayed system
$D'=(N',M',\tau',\scra',V')$ such that
\begin{itemize}
\item[(a)] \quad $D'$ is an extension of $D$,
\item[(b)] \quad $D'$ is generically flat,
\item[(c)] \quad $(V',\sigma_0)$ is periodic to order $k$ \qquad and
\item[(d)] \quad $\Phi_\R^{V'}(\sigma_0)\quad\subseteq\quad S_{N'}$.
\end{itemize}
\end{lem}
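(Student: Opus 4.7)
The plan is to construct $D'$ by a dynamical surgery, supported in the two wings $S_{N'}\setminus S_N$, that closes the $V$-orbit of $\sigma_0$ into a loop. Since $\sigma_0\in\scrf(D)$, fix $v\in\R^2$ with $(-N,v),(N,v)\in\Phi_\R^V(\sigma_0)$; by \lref{lem-one-start-end} these two points are uniquely determined on the orbit. Take $N':=N+1$ (or slightly larger if subsequent bookkeeping requires it). On $M'\cap S_N$ the structures are forced to agree with those of $D$, and on $L_{N'}\cup R_{N'}$ they are forced to be the standard ones, so the only freedom is concentrated in the two wings $(-N',-N]\times\R^2$ and $[N,N')\times\R^2$.

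In those wings I would place a $(\tau_\#)$-constructible set together with a manifold topology $\tau'$ and $C^\omega$ atlas $\scra'$ that identify, along a real-analytic seam, a neighborhood of $(N,v)$ in the right wing with a neighborhood of $(-N,v)$ in the left wing. This is the dynamical surgery indicated by the graphic referenced at the opening of this section. The gluing is arranged so that the forward continuation of the $V$-orbit of $\sigma_0$ past $(N,v)$ is joined to its backward continuation past $(-N,v)$; consequently the combined $V'$-orbit of $\sigma_0$ in $M'$ becomes a closed loop entirely contained in $S_{N'}$, giving (d) at once.

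I would then define $V':=\theta'E$ on $(M')^\circ$, where $\theta'$ equals $1$ on $L_{N'}\cup R_{N'}$, agrees with the original $\theta$ on $M^\circ\cap S_N$, and is shaped freely (component by component, so as not to be forced constant by real-analytic continuation) on the wings. The choice of $\theta'$ on the wings controls both the integer period $T$ of the new closed orbit and the germ of the return map $\Phi_T^{V'}$ at $\sigma_0$. To achieve (c) --- that $\Phi_T^{V'}$ agrees with $\Id_{M'}$ to order $k$ at $\sigma_0$ for some $T\in\Z\setminus\{0\}$ --- it suffices to prescribe a finite set of $k$-jet data for $\theta'$ at finitely many base points along the seam. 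I would first realize the required jet by a $C^k$ function taking values in $[2,3]$, and then apply \lref{lem-match-jet} (rescaled into $(0,1]$) to upgrade to a $C^\omega$ function agreeing with the $C^k$ prescription to order $k$ at the base points.

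It remains to verify (b). On $Q^\circ:=(M')^\circ\cap S_N=M^\circ\cap S_N$ the dynamics are unchanged, so every $D$-flat point $\sigma\in Q^\circ$ whose flatness witness $v_\sigma$ differs from $v$ has its $V'$-orbit pass through the wings essentially undisturbed, exiting on both sides at the same witness, and is therefore $D'$-flat. The exceptional locus $v_\sigma=v$ is a single orbit, which is $(\tau_\#)$-nowhere dense; combining with \lref{lem-flatpt-density}, \lref{lem-unambig-nw-dense} and \cref{cor-unambig-nw-dense} then gives that $\scrf(D')$ is $\tau'$-comeager in $M'$. I expect the main obstacle to be the topological surgery itself --- arranging the constructible set, the topology, and the $C^\omega$ atlas so that the glued loop genuinely exists, the manifold axioms (especially Hausdorffness) are preserved, and the witness-matching required for $D'$-flatness of nearby orbits is symmetric enough to survive across the seam --- rather than the jet matching, which is delivered cleanly by \lref{lem-match-jet}.
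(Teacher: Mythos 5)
Your sketch correctly identifies the architecture --- a surgery supported in $S_{N'}\setminus S_N$, agreement with $D$ on $S_N$, and \lref{lem-match-jet} to control the jet of the time reparametrization --- but it misses the central dynamical idea, and as written the construction would fail item (b). If you identify an open neighborhood of $(N,v)$ with an open neighborhood of $(-N,v)$ by a map fixing the $\R^2$-coordinate (which is what ``joined to its backward continuation past $(-N,v)$'' amounts to, since $V'=\theta'E$ on the wings preserves $\Pi$), then every orbit with witness $v_\sigma$ sufficiently near $v$ --- an open set of them, not just the orbit of $\sigma_0$ --- is captured by the gluing and recirculates forever, never reaching $\{-N'\}\times\R^2$ or $\{N'\}\times\R^2$. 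Such orbits are not $D'$-flat, so $\scrf(D')$ omits an open set. Your sentence ``The exceptional locus $v_\sigma=v$ is a single orbit, which is $(\tau_\#)$-nowhere dense'' is exactly where the argument breaks.

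The paper's fix --- and it is the heart of the lemma --- is to twist the gluing by a hyperbolic $C^\omega$ diffeomorphism $\psi$ of $\R^2$: it is the time-one map of $q_0^k H_0^@$, where $H_0^@$ is a linear saddle fixing $w_0$ and $q_0^k$ flattens to order $2k$ there, so $\psi(w_0)=w_0$ and $\psi$ agrees with $\Id_{\R^2}$ to order $2k$ at $w_0$ while still having hyperbolic dynamics globally. Each passage through the wings composes the transverse return with $\psi$; for any $w$ off the stable/unstable axes $Z_H\cup Z_V$, the iterates $\psi^{\pm m}(w)$ leave every compact set, so a generic orbit escapes the recirculation after finitely many loops and exits at $\pm N'$ (Claims 3--4 and 8--10 of the paper). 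The exceptional set is then a countable union of $\Phi^{V'}_\R$-saturations of nowhere-dense sets $Z'_0,\ldots,Z'_7$, hence meager (Claims 11--12). The same $\psi$ supplies the transverse component of the order-$k$ jet of the return map at $\sigma_0$ (Claims 14--15); your mechanism controls only the reparametrization $\Lambda$. Two lesser points: the topological surgery is not an identification of open sets but a removal of closed boxes $X_1,\ldots,X_4$ followed by gluing of boundary slabs via the charts $\rho_5,\ldots,\rho_8$, which is what keeps $M'$ constructible, $\tau'$ Hausdorff, and $(M')^\circ$ carrying $\tau_\#$; and the paper needs $N'=N+10$, not $N+1$, to house all of this.
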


\begin{proof}
By definition of a displayed system,
$\tau|(M^\circ)=(\tau_\#)|(M^\circ)$
and $\scra|(M^\circ)=(\scra_\#)|(M^\circ)$.
Give $M^\circ$ this common topology and maximal $C^\omega$ atlas.
Give $\R^2$ its standard topology and $C^\omega$ maximal atlas.
Give every open subset of $\R^2$ its standard topology and $C^\omega$ maximal atlas.

Let $H_0^@$ be the $C^\omega$ vector field on $\R^2$ represented by the linear map
$(y,z)\mapsto(y,-z):\R^2\to\R^2$.
Then $H_0^@$ is complete and vanishes at~$(0,0)$.
Moreover, for all $t\in\R$,
$\Phi_t^{H_0^@}(y,z)=(e^ty,e^{-t}z)$.
We let $C_0^@:=\{(y,z)\in\R^2\,|\,y^2+z^2=1\}$ denote the circle
of radius $1$ in $\R^2$ centered at $(0,0)$.
Define $\zeta_0^@:\R^2\to\R$ by $\zeta_0^@(y,z)=y^2+z^2-1$.
Then the function $\zeta_0^@$ is $C^\omega$ and vanishes on and only on~$C_0^@$.
We let $B_0^@:=\{(y,z)\in\R^2\,|\,y^2+z^2\le1\}$ be the closed disk
of radius $1$ in~$\R^2$ centered at $(0,0)$.
Let $Z_H^@:=\R\times\{0\}\subseteq\R^2$ and let $Z_V^@:=\{0\}\times\R\subseteq\R^2$.
Then $C_0^@$, $Z_H^@$ and $Z_V^@$ are all nowhere dense in~$\R^2$.
The sets $\{(0,0)\}$, $Z_H^@$ and $Z_V^@$ are all invariant under the flow of $H_0^@$.
For all $v\in(\R^2)\backslash(Z_V^@)$,
we have: $\Phi_t^{H_0^@}(v)$ leaves compact sets in~$\R^2$, as $t\to\infty$.
For all $v\in(\R^2)\backslash(Z_H^@)$,
we have: $\Phi_{-t}^{H_0^@}(v)$ leaves compact sets in $\R^2$, as $t\to\infty$.

Let $q_0:\R^2\to\R$ be defined by $q_0(y,z)=1-[\exp(-y^2-z^2)]$.
Then $q_0$~is $C^\omega$ and vanishes to order $1$ at $(0,0)$.
Moreover, for all $v\in\R^2\backslash\{(0,0)\}$, we have $0<q_0(v)<1$.
Let $H_*^@:=q_0^kH_0^@$.
Then $H_*^@$~is complete and $H_*^@$ vanishes to order~$2k$ at $(0,0)$.

Let $\psi^@:=\Phi_1^{H_*^@}:\R^2\to\R^2$.
Then $\psi^@:\R^2\to\R^2$ is a $C^\omega$ diffeomorphism
which agrees with the identity $\Id_{\R^2}:\R^2\to\R^2$ to order~$2k$ at~$(0,0)$.
Also, $\psi^@(0,0)=(0,0)$ and $\psi^@(Z_H^@)=Z_H^@$ and $\psi^@(Z_V^@)=Z_V^@$.
For all $v\in(\R^2)\backslash(Z_V^@)$,
we have: $(\psi^@)^m(v)$ leaves compact sets in $\R^2$, as $m\to\infty$.
For all $v\in(\R^2)\backslash(Z_H^@)$,
we have: $(\psi^@)^{-m}(v)$ leaves compact sets in $\R^2$, as $m\to\infty$.

As $\sigma_0\in\scrf(D)$, fix $w_0\in\R^2$ such that
$(-N,w_0),(N,w_0)\in\Phi_\R^V(\sigma_0)$.
We define a translation $\scrt:\R^2\to\R^2$ by $\scrt(v)=v+w_0$.
We then define $\psi:=\scrt\circ(\psi^@)\circ(\scrt^{-1}):\R^2\to\R^2$.
Then $\psi:\R^2\to\R^2$ is a $C^\omega$~diffeomorphism
that agrees with $\Id_{\R^2}:\R^2\to\R^2$ to order $2k$ at~$w_0$.
Then $\psi(w_0)=w_0$.
Let $Z_H:=\scrt(Z_H^@)$ and $Z_V:=\scrt(Z_V^@)$.
Then we have both $\psi(Z_H)=Z_H$ and $\psi(Z_V)=Z_V$.
For all $v\in(\R^2)\backslash(Z_V)$,
we have: $\psi^m(v)$ leaves compact sets in $\R^2$, as $m\to\infty$.
For all $v\in(\R^2)\backslash(Z_H)$,
we have: $\psi^{-m}(v)$ leaves compact sets in $\R^2$, as $m\to\infty$.

We define $B_0:=\scrt(B_0^@)$ and $C_0:=\scrt(C_0^@)$.
Then $w_0\in(B_0)\backslash(C_0)$.
Let $\zeta_0:=(\zeta_0^@)\circ(\scrt^{-1}):\R^2\to\R$.
Then $\zeta_0$ is $C^\omega$ and vanishes on and only on~$C_0$.
Let $B_*:=\psi(B_0)$ and $C_*:=\psi(C_0)$.
Then $w_0\in(B_*)\backslash(C_*)$.
Let $\zeta_*:=(\zeta_0)\circ(\psi^{-1}):\R^2\to\R$.
Then $\zeta_*$ is $C^\omega$ and vanishes on and only on~$C_*$.
The sets $C_0$, $C_*$, $Z_H$ and $Z_V$ are all nowhere dense in $\R^2$.

Define $\alpha,\beta,\gamma:\R\to\R$ and $f,g,h:\R^3\to\R$ by
\begin{eqnarray*}
\alpha(x)&=&(x+N+7)(x+N+6),\\
\beta(x)&=&(x-N-2)(x-N-3)(x-N-4)(x-N-5),\\
\gamma(x)&=&(x-N-6)(x-N-7),\\
f(x,y,z)\,\,&=&\,\,1\,\,-\,\,[\,\exp\,(\,-\,[\alpha(x)]^2\,-\,[\zeta_*(y,z)]^2\,)\,],\\
g(x,y,z)\,\,&=&\,\,1\,\,-\,\,[\,\exp\,(\,-\,[\beta(x)]^2\,-\,[\zeta_0(y,z)]^2\,)\,]\quad\hbox{and}\\
h(x,y,z)\,\,&=&\,\,1\,\,-\,\,[\,\exp\,(\,-\,[\gamma(x)]^2\,-\,[\zeta_*(y,z)]^2\,)\,].
\end{eqnarray*}
Then $f,g,h$ are $C^\omega$ and $0\le f<1$ and $0\le g<1$ and $0\le h<1$.
Also,
\begin{itemize}
\item$f$ vanishes on and only on $\{-N-7,-N-6\}\times C_*$,
\item$g$ vanishes on and only on $\{N+2,N+3,N+4,N+5\}\times C_0$,
\item$h$ vanishes on and only on $\{N+6,N+7\}\times C_*$.
\end{itemize}

As $(-N,w_0),(N,w_0)\in\Phi_\R^V(\sigma_0)$,
we get $(N,w_0)\in\Phi_\R^V(-N,w_0)$.
Fix $t_0\in\R$ such that $(N,w_0)=\Phi_{t_0}^{V}(-N,w_0)$.
Since $V=E$ on $L_N$, we get $t_0>0$.
We have $\pi(\Phi_{t_0}^{V}(-N,w_0))=\pi(N,w)=N$.
Choose an open neighborhood $W_0$ in~$\R^2$ of $w_0$
such that, for all $w\in W_0$,
we have $\pi(\Phi_{t_0}^{V}(-N,w))\in(N-0.5,N+0.5)$.
Let $J:=(t_0-0.5,t_0+0.5)\subseteq\R$.
Then, for all $(t,w)\in J\times W_0$,
we have $\pi(\Phi_t^{V}(-N,w))\in(N-1,N+1)$,
so $\Phi_t^{V}(-N,w)\in(N-1,N+1)\times\R^2\subseteq R_N\subseteq M^\circ$.
Moreover,
$$(t,w)\quad\mapsto\quad\Phi_t^V(-N,w)\quad:\quad J\,\times\,W_0\quad\to\quad M^\circ$$
is $C^\omega$.
Then $(t,w)\mapsto\pi(\Phi_t^V(-N,w)):J\times W_0\to\R$ is $C^\omega$.
By the Implicit Function Theorem,
fix an open neighborhood $W_1$ in~$W_0$ of $w_0$
and a $C^\omega$ function $\lambda_1:W_1\to(0,\infty)$
such that $\lambda_1(w_0)=t_0$ and such that, for all $w\in W_1$,
we have $\pi(\Phi_{\lambda_1(w)}^{V}(-N,w))=N$.

Let $W:=W_1\cap[(B_0)\backslash(C_0)]\cap[(\R^2)\backslash(C_*)]$.
Then $w_0\in W\subseteq(B_0)\backslash(C_0)$ and $W$~is open in $\R^2$.
We have $W\cap C_*=\emptyset$,
and so we conclude that $f\ne0$ on $[-N-6,-N-5]\times W$.
Also, we have $W\cap C_0=\emptyset$,
and so we conclude that $g\ne0$ on $[N+1,N+2]\times W$.
Also, we have that $h\ne0$ on $[N+1,N+2]\times W$.
Then, using the Implicit Function Theorem,
we define $C^\omega$~functions $\lambda_2,\lambda_3:W\to(0,\infty)$ by:
for all $w\in W$,
\begin{eqnarray*}
\Phi_{\lambda_2(w)}^{fE}\,(\,-N-6\,,w\,)&=&(\,-N-5\,,\,w\,)\qquad\hbox{and}\\
\Phi_{\lambda_3(w)}^{ghE}\,(\,N+1\,,\,w\,)&=&(\,N+2\,,\,w\,).
\end{eqnarray*}

Let $T_0:=[\lambda_1(w_0)]+[\lambda_2(w_0)]+[\lambda_3(w_0)]$.
Then, because we have $\lambda_1(w_0),\lambda_2(w_0),\lambda_3(w_0)\in(0,\infty)$,
it follows that $T_0>0$.
Fix an integer~$T$ such that $T_0+2\le T\le T_0+3$.
Then $2<T$, so $T\ne0$.
Define $\lambda:=T-\lambda_1-\lambda_2-\lambda_3:W\to\R$.
Then $\lambda(w_0)=T-T_0$, so $2\le\lambda(w_0)\le3$.
By \lref{lem-match-jet},
fix a $C^\omega$ function $\lambda_0:\R^2\to(1,4)$
such that $\lambda_0$ agrees with $\lambda$ to order $k$ at~$w_0$.
Define $\Lambda:=\lambda_0+\lambda_1+\lambda_2+\lambda_3:W\to\R$.
Then $\Lambda$ agrees with the constant~$T$ to order~$k$ at $w_0$.
Let $N':=N+10$,
\vskip.15in
\begin{itemize}
\item[]$I_1:=(\,-N-8\,,\,-N-5\,]$, \qquad\qquad$I_2:=[\,N+1\,,\,N+8\,)$,
\vskip.17in
\item[]$I_3:=(\,-\infty\,,\,-N-9]$, \qquad\qquad\qquad$I_4:=\,[N+9\,,\,\infty)$,
\end{itemize}
\vskip.1in
\begin{eqnarray*}
X_1&:=&[\,-N-7\,,\,-N-6\,]\,\,\times\,\,B_*,\\
Y_1&:=&\{\,-N-7\,,\,-N-6\,\}\,\,\times\,\,(B_*\backslash C_*),
\end{eqnarray*}
\vskip-.18in
\begin{eqnarray*}
X_2&:=&[\,N+6\,,\,N+7\,]\,\,\times\,\,B_*,\\
Y_2&:=&\{\,N+6\,,\,N+7\,\}\,\,\times\,\,(B_*\backslash C_*),
\end{eqnarray*}
\vskip-.14in
$$X_3\,:=\,[\,N+2\,,\,N+3\,]\,\,\times\,\,B_0,$$
\vskip-.14in
$$X_4\,:=\,[\,N+4\,,\,N+5\,]\,\,\times\,\,B_0,$$
\vskip-.12in
\begin{eqnarray*}
M'_0&:=&M\,\,\cap\,\,S_N,\\
M'_1&:=&[\,\,(I_1\,\,\times\,\,\R^2)\,\,\backslash\,\,X_1\,\,]\,\,\cup\,\,Y_1,\\
M'_2&:=&[\,\,(I_2\,\,\times\,\,\R^2)\,\,\backslash\,\,(X_2\cup X_3\cup X_4)\,\,]\,\,\cup\,\,Y_2,\\
M'_3&:=&I_3\,\,\times\,\,\R^2,\\
M'_4&:=&I_4\,\,\times\,\,\R^2\qquad\qquad\hbox{and}\\
M'_5&:=&\{\,(x,v)\,\in\,\R\times\R^2\,\,|\,\,-N-[\lambda_0(v)]\,<\,x\,\le\,-N\,\}.
\end{eqnarray*}
\vskip.1in
Let $M':=M'_0\cup M'_1\cup M'_2\cup M'_3\cup M'_4\cup M'_5$.
For all integers $j\in[1,4]$, we have $M'_j\not\subseteq(M')^\circ$.
By contrast, we have $M'_5\subseteq(M')^\circ$.
In fact, we have
$(M')^\circ=[(M'_0)^\circ]\cup[(M'_1)^\circ]\cup[(M'_2)^\circ]\cup[(M'_3)^\circ]\cup[(M'_4)^\circ]\cup[M'_5]$.

Define $\rho_1,\rho_2,\rho_3,\rho_4:(-0.5,0.5)\times\R^2\to M'$ by
\begin{eqnarray*}
\rho_1(t,v)&=&\begin{cases}
\Phi_t^E(-N-9,v),&\hbox{if }-0.5<t\le0,\\
\Phi_t^{fE}(-N-8,v),&\hbox{if }0<t<0.5,
\end{cases}\\
\rho_2(t,v)&=&\begin{cases}
\Phi_t^{fE}(-N-5,v),&\hbox{if }-0.5<t\le0,\\
\Phi_t^E(-N-[\lambda_0(v)],v),&\hbox{if }0<t<0.5,
\end{cases}\\
\rho_3(t,v)&=&\begin{cases}
\Phi_t^E(N,v),&\hbox{if }-0.5<t<0,\\
\Phi_t^{ghE}(N+1,v),&\hbox{if }0\le t<0.5,
\end{cases}\\
\rho_4(t,v)&=&\begin{cases}
\Phi_t^{ghE}(N+8,v),&\hbox{if }-0.5<t<0,\\
\Phi_t^E(N+9,v),&\hbox{if }0\le t<0.5.
\end{cases}
\end{eqnarray*}
Define $\rho_5,\rho_6,\rho_7,\rho_8:(-0.5,0.5)\times(B_0\backslash C_0)\to M'$ by
\begin{eqnarray*}
\rho_5(t,v)&=&\begin{cases}
\Phi_t^{fE}(-N-7,\psi(v)),&\hbox{if }-0.5<t\le0,\\
\Phi_t^{ghE}(N+3,v),&\hbox{if }0<t<0.5,
\end{cases}\\
\rho_6(t,v)&=&\begin{cases}
\Phi_t^{ghE}(N+2,v),&\hbox{if }-0.5<t<0,\\
\Phi_t^{fE}(-N-6,\psi(v)),&\hbox{if }0\le t<0.5,
\end{cases}\\
\rho_7(t,v)&=&\begin{cases}
\Phi_t^{ghE}(N+4,v),&\hbox{if }-0.5<t<0,\\
\Phi_t^{ghE}(N+7,\psi(v)),&\hbox{if }0\le t<0.5,
\end{cases}\\
\rho_8(t,v)&=&\begin{cases}
\Phi_t^{ghE}(N+6,\psi(v)),&\hbox{if }-0.5<t\le0,\\
\Phi_t^{ghE}(N+5,v),&\hbox{if }0<t<0.5.
\end{cases}
\end{eqnarray*}
For all integers $j\in[1,8]$, let
$\tau'_j:=\{\rho_j(U)\,|\,U\in\tau_\#\hbox{ and }U\subseteq\dom[\rho_j]\}$.
Let
$\tau'_0:=([\tau_\#]|[(M')^\circ])\cup(\tau|[M\cap S_N])\cup\tau'_1\cup\cdots\cup\tau'_8$.
Then $\tau'_0$ is a basis for a manifold topology $\tau'$ on~$M'$.
The set $(M')^\circ$ is then $\tau'$-open and $\tau'$-dense in $M'$.
For all integers $j\in[1,8]$, 
the map $\rho_j$ is injective and $\dom[\rho_j]\subseteq\R^3$;
let $R_j:=\im[\rho_j]\in\tau'_j\subseteq\tau'$ and
let $\kappa_j:R_j\to\R^3$ be the inverse of $\rho_j$.
Let
$\scra'_0:=([\scra_\#]|[(M')^\circ])\cup(\scra|[M\cap S_N])
\cup\{\kappa_1,\ldots,\kappa_8\}$.
Then $\scra'_0$~is a $C^\omega$ atlas on $(M',\tau')$.
Let $\scra'$ be the unique maximal $C^\omega$~atlas on~$(M',\tau')$
such that $\scra'_0\subseteq\scra'$.
Then $\tau'|[(M')^\circ]=[\tau_\#]|[(M')^\circ]$
and $\scra'|[(M')^\circ]=[\scra_\#]|[(M')^\circ]$.
Give $(M')^\circ$ this common topology and maximal $C^\omega$ atlas.
Define a $C^\omega$ vector field $V'_\circ$ on $(M')^\circ$ by
$$V'_\circ=
\begin{cases}
V&\hbox{on }[(M'_0)^\circ]\cup[(M'_3)^\circ]\cup[(M'_4)^\circ]\cup[M'_5],\cr
fE&\hbox{on }(M'_1)^\circ,\cr
ghE&\hbox{on }(M'_2)^\circ.
\end{cases}$$
By construction of $\tau'$ and $\scra'$,
let $V'$ be the unique $C^\omega$ vector field on~$(M',\tau',\scra')$
such that $V'|[(M')^\circ]=V'_\circ$.
Then $V'=V$ on $M'_0$.
Also $V'=V=E$ on $[(M'_3)^\circ]\cup[(M'_4)^\circ]\cup[M'_5]\cup[L_N\cap S_N]\cup[R_N\cap S_N]$.
Let $D':=(N',M',\tau',\scra',V')$.
Then $D'$~is a displayed system, and from the construction above,
we know that $D'$ is an extension of~$D$.
Therefore (a)~of \lref{lem-iteration-lem} holds.
It remains to prove (b), (c) and (d).

Let $Z'_0:=[M'_0]\backslash[\scrf(D)]$,
\begin{eqnarray*}
Z'_1&:=&\{N+7.5\}\,\,\times\,\,C_*,\\
Z'_2&:=&\{N+3.5\}\,\,\times\,\,C_0,\\
Z'_3&:=&\{-N-5.5\}\,\,\times\,\,[C_*\cup Z_H],\\
Z'_4&:=&\{N+5.5\}\,\,\times\,\,[C_0\cup Z_V],\\
Z'_5&:=&\{N+1.5\}\,\,\times\,\,[C_0\cup Z_V],\\
Z'_6&:=&\{N+5.5\}\,\,\times\,\,C_*,\\
Z'_7&:=&\{-N'\}\,\,\times\,\,C_*,\\
P_1&:=&\big(\,\,[N+7,N+8)\,\times\,B_*\,\,\big)\quad\cap\quad M',\\
P_2&:=&(N+3,N+4)\,\times\,B_0,\\
P_3&:=&\big(\,\,[-N-6,-N-5)\,\times\,B_*\,\,\big)\quad\cap\quad M',\\
P_4&:=&(N+5,N+6)\,\times\,B_0,\\
P_5&:=&(-N-8,-N-7)\,\times\,\R^2,\\
P_6&:=&[N+1,N+2)\,\times\,\R^2 \qquad\qquad\qquad\qquad \hbox{and}\\
P_7&:=&[N+9,N+10)\,\times\,\R^2.
\end{eqnarray*}
For all integers $j\in[0,7]$,
let $Z_j^*:=\Phi_\R^{V'}(Z'_j)$.
Let $Z^*:=Z_0^*\cup\cdots\cup Z_7^*$.
Let $P_*:=\{\,\mu\in M'\,\,|\,\,
\exists\mu_*\in\Phi_\R^{V'}(\mu)\,\,\hbox{ s.t. }\,\pi(\mu_*)<(\pi(\mu))-0.5\,\}$.
\vskip.02in

{\it Claim 1:}
We have $[P_1]\backslash[Z_1^*]\subseteq P_*$.
{\it Proof of Claim 1:}
Suppose that $s\in[N+7,N+8)\subseteq\R$
and that $v\in B_*\subseteq\R^2$.
Let $\mu:=(s,v)\in\R^3$.
Assume that $\mu\in M'$ and that $\mu\notin Z_1^*$.
We wish to prove that $\mu\in P_*$.
We define $\mu_*:=(N+3.5,\psi^{-1}(v))$.
Then $\pi(\mu_*)=N+3.5<s-0.5$,
so $\pi(\mu_*)<(\pi(\mu))-0.5$,
so it remains to prove that
$\mu_*\in\Phi_\R^{V'}(\mu)$.

We have $\mu\notin Z_1^*$, so $[\Phi_\R^{V'}(\mu)]\cap Z'_1=\emptyset$.
So, as $(N+7.5,v)\in\Phi_\R^{V'}(\mu)$, we get $(N+7.5,v)\notin Z'_1$, so $v\notin C_*$.
Then, by construction of $V'$,
we have $(N+3.5,\psi^{-1}(v))\in\Phi_{(-\infty,0)}^{V'}(\mu)$.
Then $\mu_*=(N+3.5,\psi^{-1}(v))\in\Phi_\R^{V'}(\mu)$,
as desired.
{\it End of proof of Claim 1.}

{\it Claim 2:}
We have $[P_2]\backslash[Z_2^*]\subseteq P_*$.
{\it Proof of Claim 2:}
Suppose that $s\in(N+3,N+4)\subseteq\R$
and that $v\in B_0\subseteq\R^2$.
Let $\mu:=(s,v)\in\R^3$.
Assume that $\mu\notin Z_2^*$.
We wish to prove that $\mu\in P_*$.
We define $\mu_*:=(-N-7,\psi(v))$.
Then we have $\pi(\mu_*)=-N-7<s-0.5$,
so $\pi(\mu_*)<(\pi(\mu))-0.5$,
so it remains to prove that
$\mu_*\in\Phi_\R^{V'}(\mu)$.

We have $\mu\notin Z_2^*$, so $[\Phi_\R^{V'}(\mu)]\cap Z'_2=\emptyset$.
So, as $(N+3.5,v)\in\Phi_\R^{V'}(\mu)$, we get $(N+3.5,v)\notin Z'_2$, so $v\notin C_0$.
Then, by construction of $V'$, we have 
$(-N-7,\psi(v))\in\Phi_{(-\infty,0)}^{V'}(\mu)$.
Then $\mu_*=(-N-7,\psi(v))\in\Phi_\R^{V'}(\mu)$,
as desired
{\it End of proof of Claim 2.}

{\it Claim 3:}
We have $[P_3]\backslash[Z_0^*\cup Z_3^*]\subseteq P_*$.
{\it Proof of Claim 3:}
Suppose that $s\in[-N-6,-N-5)\subseteq\R$
and $v\in B_*\subseteq\R^2$.
Let $\mu:=(s,v)\in\R^3$.
Assume $\mu\in M'$, $\mu\notin Z_0^*$ and $\mu\notin Z_3^*$.
We wish to prove that $\mu\in P_*$.

We have $\mu\notin Z_3^*$, and it follows that $[\Phi_\R^{V'}(\mu)]\cap Z'_3=\emptyset$.
So, as $(-N-5.5,v)\in\Phi_\R^{V'}(\mu)$, we get $(-N-5.5,v)\notin Z'_3$, so $v\notin Z_H$.
Fix an integer $m\ge1$ such that
$$v,\psi^{-1}(v),\ldots,\psi^{-m+1}(v)\in B_*\qquad\hbox{and}\qquad\psi^{-m}(v)\notin B_*.$$
Define $\mu_*:=(-N-7,\psi^{-m}(v))$.
Then $\pi(\mu_*)=-N-7<s-0.5$,
so $\pi(\mu_*)<(\pi(\mu))-0.5$,
so it remains to prove that
$\mu_*\in\Phi_\R^{V'}(\mu)$.

We have $\mu\notin Z_0^*$, so $[\Phi_\R^{V'}(\mu)]\cap Z'_0=\emptyset$,
so $[\Phi_\R^{V'}(\mu)]\cap M'_0\subseteq\scrf(D)$.
Also, as $[\Phi_\R^{V'}(\mu)]\cap Z'_3=\emptyset$,
we get $[\Phi_\R^{V'}(\mu)]\cap[\{-N-5.5\}\times C_*]=\emptyset$.
Then, working in reverse time, starting at $\mu=(s,v)$,
we see, from the construction of $V'$, that all of the following
are elements of $\Phi_{(-\infty,0)}^{V'}(\mu)$:
\begin{itemize}
\item[]$(-N-6,v)$,
\item[]$(N-0.5,\psi^{-1}(v))$, \quad $(-N+0.5,\psi^{-1}(v))$, \quad $(-N-6,\psi^{-1}(v))$,
\item[]$(N-0.5,\psi^{-2}(v))$, \quad $(-N+0.5,\psi^{-2}(v))$, \quad $(-N-6,\psi^{-2}(v))$,
\item[]$\cdots\cdots\cdots\cdots$
\item[]$(N-0.5,\psi^{-m}(v))$, \quad $(-N+0.5,\psi^{-m}(v))$, \quad $(-N-6,\psi^{-m}(v))$,
\item[]$(-N-7,\psi^{-m}(v))$.
\end{itemize}
Then $\mu_*=(-N-7,\psi^{-m}(v))\in\Phi_\R^{V'}(\mu)$.
{\it End of proof of Claim 3.}

{\it Claim 4:}
We have $[P_4]\backslash[Z_4^*]\subseteq P_*$
{\it Proof of Claim 4:}
Suppose that $s\in(N+5,N+6)\subseteq\R$
and that $v\in B_0\subseteq\R^2$.
Let $\mu:=(s,v)\in\R^3$.
Assume that $\mu\notin Z_4^*$.
We wish to prove that $\mu\in P_*$.

We have $\mu\notin Z_4^*$, and it follows that $[\Phi_\R^{V'}(\mu)]\cap Z'_4=\emptyset$.
So, as $(N+5.5,v)\in\Phi_\R^{V'}(\mu)$,
we conclude that $(N+5.5,v)\notin Z'_4$, so $v\notin Z_V$.
Fix an integer $m\ge1$ such that
$$v,\psi(v),\ldots,\psi^{m-1}(v)\in B_0\qquad\hbox{and}\qquad\psi^m(v)\notin B_0.$$
Define $\mu_*:=(N+4,\psi^m(v))$.
Then we have $\pi(\mu_*)=N+4<s-0.5$,
so $\pi(\mu_*)<(\pi(\mu))-0.5$,
so it remains to prove that
$\mu_*\in\Phi_\R^{V'}(\mu)$.

Because $[\Phi_\R^{V'}(\mu)]\cap Z'_4=\emptyset$,
we get $[\Phi_\R^{V'}(\mu)]\cap[\{N+5.5\}\times C_0]=\emptyset$.
Then, working in reverse time, starting at $\mu=(s,v)$,
we see, from the construction of $V'$, that all of the following
are elements of $\Phi_{(-\infty,0)}^{V'}(\mu)$:
\begin{itemize}
\item[]$(N+6,\psi(v))$, \quad $(N+5.5,\psi(v))$,
\item[]$(N+6,\psi^2(v))$, \quad $(N+5.5,\psi^2(v))$,
\item[]$\cdots\cdots\cdots\cdots$
\item[]$(N+6,\psi^m(v))$, \quad $(N+5.5,\psi^m(v))$,
\item[]$(N+4,\psi^m(v))$.
\end{itemize}
Then $\mu_*=(N+4,\psi^m(v))\in\Phi_\R^{V'}(\mu)$.
{\it End of proof of Claim~4.}

{\it Claim 5:}
$[M'_0]\backslash[Z'_0]\subseteq P_*$
{\it Proof of Claim 5:}
Let $\mu\in M'_0$,
and assume that $\mu\notin Z'_0$.
We wish to show that $\mu\in P_*$.

Since $\mu\notin Z'_0=[M'_0]\backslash[\scrf(D)]$
and since $\mu\in M'_0$,
we get $\mu\in\scrf(D)$.
So fix $v\in\R^2$ such that $(-N,v)\in\Phi_\R^V(\mu)$.
Let $\mu_*:=(-N-1,v)$.
Because $\mu\in M'_0\subseteq S_N$,
it follows that $-N<\pi(\mu)$.
Then we have $\pi(\mu_*)=-N-1<(\pi(\mu))-0.5$,
so it remains to show that $\mu_*\in\Phi_\R^{V'}(\mu)$.

From the construction of $V'$,
we see that $(-N-1,v)\in\Phi_\R^{V'}(-N,v)$.
We have $(-N,v)\in\Phi_\R^V(\mu)$, so, since $V'=V$ on $M'_0$,
we conclude that $(-N,v)\in\Phi_\R^{V'}(\mu)$, and, therefore,
that $\Phi_\R^{V'}(-N,v)=\Phi_\R^{V'}(\mu)$.
Then $\mu_*=(-N-1,v)\in\Phi_\R^{V'}(-N,v)=\Phi_\R^{V'}(\mu)$.
{\it End of proof of Claim~5.}

{\it Claim 6:} Let $\mu_0\in M'$.
Assume $\mu_0\notin Z_0^*\cup\cdots\cup Z_4^*$.
Then there exists $\mu_*\in\Phi_\R^{V'}(\mu_0)$
such that $\pi(\mu_*)<(\pi(\mu_0))-0.5$.
{\it Proof of Claim 6:}
By definition of $P_*$, we wish to show that $\mu_0\in P_*$.

Let $P':=P_1\cup\cdots\cup P_4\cup M'_0$.
We have $\mu_0\notin Z_1^*$, $\mu_0\notin Z_2^*$,
$\mu_0\notin Z_0^*\cup Z_3^*$ and $\mu_0\notin Z_4^*$.
Also, $\mu_0\notin Z_0^*\supseteq Z'_0$.
Then, by Claims 1--5, we are done if $\mu_0\in P'$,
so we assume that $\mu_0\in(M')\backslash(P')$.

Let $P'':=P_1\cup\cdots\cup P_7\cup M'_0\cup M'_5$.
By construction of~$V'$, for all~$\mu=(s,v)\in\R\times\R^2\subseteq\R^3$, 
we have:
\begin{eqnarray*}
\mu\in(M')\backslash(P'')&\Rightarrow&(s-0.6,v)\,\in\,\Phi_{(-\infty,0)}^{V'}(\mu),\\
\mu\in P_5&\Rightarrow&(-N-9,v)\,\in\,\Phi_{(-\infty,0)}^{V'}(\mu),\\
\mu\in P_6&\Rightarrow&(N-0.5,v)\,\in\,\Phi_{(-\infty,0)}^{V'}(\mu),\\
\mu\in P_7&\Rightarrow&(N+7.5,v)\,\in\,\Phi_{(-\infty,0)}^{V'}(\mu) \qquad \hbox{and}\\
\mu\in M'_5&\Rightarrow&(-N-5,v)\,\in\,\Phi_{(-\infty,0)}^{V'}(\mu).
\end{eqnarray*}
Therefore $(M')\backslash(P'')$, $P_5$, $P_6$, $P_7$ and $M'_5$ are all subsets of $P_*$.

Then $\mu_0\in(M')\backslash(P')\subseteq[(M')\backslash(P'')]\cup P_5\cup P_6\cup P_7\cup M'_5\subseteq P_*$,
as desired.
{\it End of proof of Claim 6.}

{\it Claim 7:} Let $\mu_0\in M'$ and assume that $\mu_0\notin Z_0^*\cup\cdots\cup Z_4^*$.
Then there exists $\mu_1\in\Phi_\R^{V'}(\mu_0)$
such that $\pi(\mu_1)=-N'$.
{\it Proof of Claim~7:}
By, if necessary, repeatedly applying Claim 6,
we arrive at~$\mu_+\in\Phi_\R^{V'}(\mu_0)$
such that $\pi(\mu_+)\le-N'$.
Let $a:=-(\pi(\mu_+))-N'$.
Let $\mu_1:=\Phi_a^{V'}(\mu_+)$.
Then $\mu_1\in\Phi_\R^{V'}(\mu_+)=\Phi_\R^{V'}(\mu_0)$,
and, since $V'=E$ on $(M'_3)^\circ$,
we conclude that $\pi(\mu_1)=(\pi(\mu_+))+a=-N'$,
as desired.
{\it End of proof of Claim 7.}

{\it Claim 8:} Let $v_1\in(B_0)\backslash(B_*)$ and $\mu_1:=(-N',v_1)$.
Assume that $\mu_1\notin Z_0^*\cup Z_5^*\cup Z_6^*$.
Then $(N',v_1)\in\Phi_{(0,\infty)}^{V'}(\mu_1)$.
{\it Proof of Claim~8:}
We have $\mu_1\notin Z_0^*$, so $[\Phi_\R^{V'}(\mu_1)]\cap Z'_0=\emptyset$,
so $[\Phi_\R^{V'}(\mu_1)]\cap M'_0\subseteq\scrf(D)$.
Working in forward time, starting at $\mu_1=(-N',v_1)$,
we see, from the construction of $V'$, that all of the following
are elements of $\Phi_{(0,\infty)}^{V'}(\mu_1)$:
\begin{itemize}
\item[]$(-N-9,v_1)$, \qquad $(-N-5,v_1)$,
\item[]$(-N+0.5,v_1)$, \qquad $(N-0.5,v_1)$, \qquad $(N+1.5,v_1)$.
\end{itemize}

We have $\mu_1\notin Z_5^*$, and it follows that $[\Phi_\R^{V'}(\mu_1)]\cap Z'_5=\emptyset$.
So, because $(N+1.5,v_1)\in\Phi_\R^{V'}(\mu_1)$, we get $(N+1.5,v_1)\notin Z'_5$, so $v_1\notin Z_V$.
Fix an integer $m\ge1$
such that $v_1,\psi(v_1),\ldots,\psi^{m-1}(v_1)\in B_0$
and $\psi^m(v_1)\notin B_0$.
For all integers $j\in[1,m]$,
we have $\psi^j(v_1)\in\psi(B_0)=B_*$.
Then:
\begin{itemize}
\item$v_1\quad\in\quad(B_0)\backslash(B_*)$,
\item$\psi(v_1)\,\,,\,\,\ldots\,\,,\,\,\psi^{m-1}(v_1)\quad\in\quad B_0\cap B_*$ \qquad\qquad and
\item$\psi^m(v_1)\quad\in\quad(B_*)\backslash(B_0)$.
\end{itemize}
Recall that $[\Phi_\R^{V'}(\mu_1)]\cap M'_0\subseteq\scrf(D)$.
Also, as $[\Phi_\R^{V'}(\mu_1)]\cap Z'_5=\emptyset$,
we see that $[\Phi_\R^{V'}(\mu_1)]\cap[\{N+1.5\}\times C_0]=\emptyset$.
Working in forward time, starting at $(N+1.5,v_1)$,
we see, from the construction of $V'$, that all of the following
are elements of $\Phi_{(0,\infty)}^{V'}(\mu_1)$:
\begin{itemize}
\item[]$(-N-6,\psi(v_1))$, \qquad $(N+1.5,\psi(v_1))$,
\item[]$(-N-6,\psi^2(v_1))$, \qquad $(N+1.5,\psi^2(v_1))$,
\item[]$\cdots\cdots\cdots\cdots$
\item[]$(-N-6,\psi^m(v_1))$, \qquad $(N+1.5,\psi^m(v_1))$,
\item[]$(N+3.5,\psi^m(v_1))$, \qquad $(N+5.5,\psi^m(v_1))$.
\end{itemize}
We have $\mu_1\notin Z_6^*$,
and so $[\Phi_\R^{V'}(\mu_1)]\cap Z'_6=\emptyset$.
That is, we have $[\Phi_\R^{V'}(\mu_1)]\cap[\{N+5.5\}\times C_*]=\emptyset$.
Working in forward time, starting at $(N+5.5,\psi^m(v_1))$,
we see, from the construction of $V'$, that all of the following
are elements of $\Phi_{(0,\infty)}^{V'}(\mu_1)$:
\begin{itemize}
\item[]$(N+6,\psi^m(v_1))$,
\item[]$(N+5.5,\psi^{m-1}(v_1))$, \qquad $(N+6,\psi^{m-1}(v_1))$,
\item[]$(N+5.5,\psi^{m-2}(v_1))$, \qquad $(N+6,\psi^{m-2}(v_1))$,
\item[]$\cdots\cdots\cdots\cdots$
\item[]$(N+5.5,v_1)$, \qquad $(N+6,v_1)$,
\item[]$(N+7.5,v_1)$, \qquad $(N+9,v_1)$, \qquad $(N',v_1)$.
\end{itemize}
In particular, $(N',v_1)\in\Phi_{(0,\infty)}^{V'}(\mu_1)$.
{\it End of proof of Claim 8.}

{\it Claim 9:} Let $v_1\in B_*$ and let $\mu_1:=(-N',v_1)$.
Assume that $\mu_1\notin Z'_7$.
Then $(N',v_1)\in\Phi_{(0,\infty)}^{V'}(\mu_1)$.
{\it Proof of Claim 9:}
As $(-N',v_1)=\mu_1\notin Z'_7$, we get $v_1\notin C_*$,
and so $\psi^{-1}(v_1)\notin\psi^{-1}(C_*)=C_0$.
Working in forward time, starting at $\mu_1=(-N',v_1)$,
we see, from the construction of $V'$, that all of the following
are elements of $\Phi_{(0,\infty)}^{V'}(\mu_1)$:
\begin{itemize}
\item[]$(-N-9,v_1)$, \qquad $(-N-7,v_1)$, \qquad $(N+3.5,\psi^{-1}(v_1))$,
\item[]$(N+7,v_1)$, \qquad $(N+9,v_1)$, \qquad $(N',v_1)$.
\end{itemize}
In particular, $(N',v_1)\in\Phi_{(0,\infty)}^{V'}(\mu_1)$.
{\it End of proof of Claim 9.}

{\it Claim 10:} Let $v_1\in\R^2$ and define $\mu_1:=(-N',v_1)$.
We assume that $\mu_1\notin Z_0^*\cup Z_5^*\cup Z_6^*\cup Z'_7$.
Then $(N',v_1)\in\Phi_{(0,\infty)}^{V'}(\mu_1)$.
{\it Proof of Claim~10:}
By Claim~8, we are done if $v_1\in(B_0)\backslash(B_*)$.
By Claim 9, we are done if $v_1\in B_*$.
Then we assume $v_1\notin[(B_0)\backslash(B_*)]\cup B_*=B_0\cup B_*$.
We have $\mu_1\notin Z_0^*$, so $[\Phi_\R^{V'}(\mu_1)]\cap Z'_0=\emptyset$,
so $[\Phi_\R^{V'}(\mu_1)]\cap M'_0\subseteq\scrf(D)$.
Working in forward time, starting at $\mu_1=(-N',v_1)$,
we see, from the construction of $V'$, that all of the following
are elements of $\Phi_{(0,\infty)}^{V'}(\mu_1)$:
\begin{itemize}
\item[]$(-N-9,v_1)$, \qquad $(-N-5,v_1)$, \qquad $(-N+0.5,v_1)$,
\item[]$(N-0.5,v_1)$, \qquad $(N+1,v_1)$, \qquad $(N+9,v_1)$, \qquad $(N',v_1)$.
\end{itemize}
In particular, $(N',v_1)\in\Phi_{(0,\infty)}^{V'}(\mu_1)$.
{\it End of proof of Claim 10.}

{\it Claim 11:} $[M']\backslash[Z^*]\subseteq\scrf(D')$.
{\it Proof of Claim 11:}
Fix $\mu_0\in M'$.
Assume that $\mu_0\notin Z^*$.
We wish to prove that $\mu_0\in\scrf(D')$.

We have $\mu_0\notin Z^*\supseteq Z_0^*\cup\cdots\cup Z_4^*$.
By Claim 7, fix $\mu_1\in\Phi_\R^{V'}(\mu_0)$
such that $\pi(\mu_1)=-N'$.
Let $v_1:=\Pi(\mu_1)$.
Then $(-N',v_1)=\mu_1\in\Phi_\R^{V'}(\mu_0)$.
By definition of $\scrf(D')$,
it now suffices to prove that
$(N',v_1)\in\Phi_\R^{V'}(\mu_0)$.

The set $Z^*$ is $V'$-invariant.
So, since $\mu_0\notin Z^*$,
we get $\mu_1\notin Z^*$.
Then $\mu_1\notin Z^*\supseteq Z_0^*\cup Z_5^*\cup Z_6^*$
and $\mu_1\notin Z^*\supseteq Z_7^*\supseteq Z'_7$.
Then, by Claim 10, $(N',v_1)\in\Phi_\R^{V'}(\mu_1)$.
So, since $\Phi_\R^{V'}(\mu_1)=\Phi_\R^{V'}(\mu_0)$,
we get $(N',v_1)\in\Phi_\R^{V'}(\mu_0)$, as desired.
{\it End of proof of Claim~11.}

{\it Claim 12:} $Z^*$ is $\tau'$-meager in~$M'$.
{\it Proof of Claim 12:}
Because $D$~is generically flat,
it follows that $M\backslash[\scrf(D)]$ is $\tau$-meager in $M$.
So, as $M'_0\subseteq M$,
we see that $Z'_0$ is $\tau$-meager in~$M$.
Then, by \cref{cor-unambig-nw-dense},
$Z'_0$ is $(\tau_\#)$-meager in~$\R^3$.
So, because $Z'_0\subseteq M'$,
it follows, from \cref{cor-unambig-nw-dense},
that $Z'_0$ is $\tau'$-meager in~$M'$.
So, since the set $\Q$ of rational numbers is countable,
we see that $\Phi_\Q^{V'}(Z'_0)$ is $\tau'$-meager in $M'$.

Let $M''_0:=M\cap([-N+0.5,N-0.5]\times\R^2)$ and
let $Z''_0:=(M''_0)\backslash(\scrf(D))$.
Because $V'=E$ on $L_N\cap S_N$,
we see that $Z'_0=\Phi_{(-0.5,0.5)}^{V'}(Z''_0)$.
Then, because $\Q+(-0.5,0.5)=\R=\R+(-0.5,0.5)$,
we conclude that $\Phi_\Q^{V'}(Z'_0)=\Phi_\R^{V'}(Z''_0)=\Phi_\R^{V'}(Z'_0)$.
Then $\Phi_\Q^{V'}(Z'_0)=\Phi_\R^{V'}(Z'_0)=Z_0^*$.
Then $Z_0^*$ is $\tau'$-meager in $M'$.
It remains to prove: $Z_1^*,\ldots,Z_7^*$ are all $\tau'$-meager in $M'$.
We will only handle $Z_1^*$; proofs for $Z_2^*,\ldots,Z_7^*$ are similar.

Let $Z_1^+:=\Phi_{(-0.5,0.5)}^{V'}(Z'_1)$.
Then $Z_1^+\subseteq(N+7,N+8)\times C_*$,
so $Z_1^+$ is $(\tau_\#)$-nowhere dense in $\R^3$.
So, by \lref{lem-unambig-nw-dense},
$Z_1^+$ is $\tau'$-nowhere dense in $M'$.
We have $\Q+(-0.5,0.5)=\R$,
so $\Phi_\Q^{V'}(Z_1^+)=\Phi_\R^{V'}(Z'_1)$.
That is, $\Phi_\Q^{V'}(Z_1^+)=Z_1^*$.
So, as $\Q$ is countable
and $Z_1^+$~is $\tau'$-nowhere dense in~$M'$,
we see that $Z_1^*$ is $\tau'$-meager in $M'$.
{\it End of proof of Claim~12.}

By Claim 11 and Claim 12,
$D'$ is generically flat, proving (b) of \lref{lem-iteration-lem}.
By \lref{lem-periodic-contained}, (d) follows from (c),
so it only remains to prove (c).
That is, it remains to show: $(V',\sigma_0)$ is periodic to order $k$.

Recall that $(-N,w_0)\in\Phi_\R^V(\sigma_0)$.
Recall that $W$, $W_1$ and $W_0$ are open subsets of $\R^2$,
that $w_0\in W\subseteq W_1\subseteq W_0\subseteq\R^2$
and that $W\subseteq(B_0)\backslash(C_0)$.
Recall the $C^\omega$ functions
$\lambda_0:\R^2\to(1,4)$ and $\lambda_1:W_1\to(0,\infty)$
and $\lambda_2,\lambda_3:W\to(0,\infty)$
and $\Lambda=\lambda_0+\lambda_1+\lambda_2+\lambda_3:W\to\R$.
Recall that,
\begin{itemize}
\item[$(+)$]\qquad for all $w\in W_1$, \qquad $\pi(\,\Phi_{\lambda_1(w)}^V(-N,w)\,)\,\,=\,\,N$.
\end{itemize}
Recall that $T$ is an integer and that $T\ne0$.
Recall that $\Lambda$ agrees with the constant $T$ to order $k$ at~$w_0$.
In particular, we have $T=\Lambda(w_0)$.

{\it Claim 13:} For all $w\in W_1$,
we have $\Phi_{\lambda_1(w)}^{V}(-N,w)=(N,w)$.
{\it Proof of Claim 13:}
Let $U':=(-N,-N+1)\times W_1$ and $U'':=[\scrf(D)]\cap U'$.
Because $\tau|(M^\circ)=(\tau_\#)|(M^\circ)$,
because $U'\subseteq L_N\subseteq M^\circ$ and
because $U'$ is $(\tau_\#)$-open in $\R^3$,
it follows that $U'$~is $\tau$-open in $M$ and that $\tau|(U')=(\tau_\#)|(U')$.
Because $\scrf(D)$ is $\tau$-comeager in $M$,
it follows that $U''$~is $\tau$-comeager in $U'$.
Then $U''$ is $(\tau_\#)$-comeager in $U'$.
So, by the Baire Category Theorem, $U''$ is $(\tau_\#)$-dense in $U'$.
So, since $\Pi(U')=W_1$, we conclude that $\Pi(U'')$ is dense in~$W_1$.
Recall that
$$(t,w)\,\,\mapsto\,\,\Phi_t^V(-N,w)\quad:\quad J\,\times\,W_0\,\,\to\,\,M^\circ$$
is $C^\omega$.
Then $w\mapsto\Pi(\Phi_{\lambda_1(w)}^V(-N,w)):W_1\to\R^2$ is $C^\omega$, hence $C^0$.
Then it suffices to prove,
for all $w\in\Pi(U'')$, that $\Phi_{\lambda_1(w)}^{V}(-N,w)=(N,w)$.
Fix $\sigma\in U''$ and
let $w:=\Pi(\sigma)$.
We wish to prove: $\Phi_{\lambda_1(w)}^{V}(-N,w)=(N,w)$.

Let $x:=\pi(\sigma)$.
We have $(x,w)=\sigma\in U''\subseteq U'\subseteq L_N$.
So, since $V=E$ on~$L_N$,
it follows that $(-N,w)\in\Phi_\R^V(x,w)$.
Then we have $(-N,w)\in\Phi_\R^V(x,w)=\Phi_\R^V(\sigma)$,
and so $\Phi_{\lambda_1(w)}^V(-N,w)\in\Phi_\R^V(\sigma)$.
Also, $w=\Pi(\sigma)\in\Pi(U')=W_1$,
so, by $(+)$, we get $\pi(\Phi_{\lambda_1(w)}^V(-N,w))=N$,
{\it i.e.}, $\Phi_{\lambda_1(w)}^V(-N,w)\in\{N\}\times\R^2$.
So, as $\Phi_{\lambda_1(w)}^V(-N,w)\in\Phi_\R^V(\sigma)$, we get
\begin{itemize}
\item[$(*)$] \qquad $\Phi_{\lambda_1(w)}^V(-N,w)\quad\in\quad[\,\Phi_\R^V(\sigma)\,]\,\,\cap\,\,[\,\{N\}\,\times\,\R^2\,]$.
\end{itemize}
As $\sigma\in U''\subseteq\scrf(D)$,
fix $v\in\R^2$ such that $(-N,v),(N,v)\in\Phi_\R^V(\sigma)$.
Then $(-N,w),(-N,v)\in[\Phi_\R^V(\sigma)]\cap[\{-N\}\times\R^2]$.
So, by \lref{lem-one-start-end}(i), $w=v$.
Then $(N,w)=(N,v)\in[\Phi_\R^V(\sigma)]\cap[\{N\}\times\R^2]$.
So, by $(*)$ and \lref{lem-one-start-end}(ii),
$\Phi_{\lambda_1(w)}^V(-N,w)=(N,w)$.
{\it End of proof of Claim 13.}

{\it Claim 14:} Let $w\in W$.
Then $\Phi_{\Lambda(w)}^{V'}(-N-6,w)=(-N-6,\psi(w))$.
{\it Proof of Claim 14:}
By the definitions of $\lambda_2$ and $\lambda_0$ and $V'$, we have:
\begin{itemize}
\item[(i)] \qquad $\Phi_{\lambda_2(w)}^{V'}\,(\,-N-6\,,\,w\,)\quad=\quad(\,-N-5\,,\,w\,)$ \qquad\qquad and
\item[(ii)] \qquad $\Phi_{\lambda_0(w)}^{V'}\,(\,-N-5\,,\,w\,)\quad=\quad(\,-N\,,w\,)$.
\end{itemize}
Since $w\in W\subseteq W_1$, by Claim 13 and the definition of $V'$, we have:
\begin{itemize}
\item[(iii)] \qquad $\Phi_{\lambda_1(w)}^{V'}\,(\,-N\,,w\,)\quad=\quad(\,N+1\,,\,w\,)$.
\end{itemize}
Since $w\in W\subseteq(B_0)\backslash(C_0)$,
by definitions of $\lambda_3$ and $V'$, we have:
\begin{itemize}
\item[(iv)] \qquad $\Phi_{\lambda_3(w)}^{V'}\,(\,N+1\,,\,w\,)\quad=\quad(\,-N-6\,,\,\psi(w)\,)$.
\end{itemize}
Since $\Lambda(w)=\lambda_3(w)+\lambda_1(w)+\lambda_0(w)+\lambda_2(w)$,
the result follows from (i), (ii), (iii) and (iv).
{\it End of proof of Claim~14.}

Let $U_0:=(0,1)\times W$ and $U_1^*:=(-N-6,-N-5)\times W$.
Then $U_0$~is $(\tau_\#)$-open in~$\R^3$
and $U_1^*$ is open in $(M')^\circ$.
We give $U_0$ the topology $(\tau_\#)|(U_0)$
and we give $U_1^*$ the topology $(\tau_\#)|(U_1^*)=(\tau')|(U_1^*)$.
We give $U_0$ the maximal $C^\omega$ atlas $(\scra_\#)|(U_0)$
and we give $U_1^*$ the maximal $C^\omega$~atlas $(\scra_\#)|(U_1^*)=(\scra')|(U_1^*)$.
We define a map $\Gamma:U_0\to U_1^*$ by
$\Gamma(t,w)=\Phi_t^{V'}(-N-6,w)$.
Let $U_1:=\Gamma(U_0)$.
Then, by the Inverse Function Theorem,
the set $U_1$ is open in $U_1^*$.
We give $U_1$ the topology $(\tau_\#)|(U_1)=(\tau')|(U_1)$
and the maximal $C^\omega$ atlas $(\scra_\#)|(U_1)=(\scra')|(U_1)$.
By the Inverse Function Theorem,
$\Gamma:U_0\to U_1$ is a $C^\omega$ diffeomorphism.

Let $\xi_0:=(0.5,w_0)\in U_0$ and let $\xi_1:=\Gamma(\xi_0)\in U_1$.
By definition of~$\Gamma$, we have $\xi_1=\Phi_{0.5}^{V'}(-N-6,w_0)$.
By construction of $V'$, we see that $(-N-6,w_0)\in\Phi_\R^{V'}(-N,w_0)$.
Therefore $\xi_1\in\Phi_\R^{V'}(-N,w_0)$.
Because $(-N,w_0)\in\Phi_\R^V(\sigma_0)$, and because $V'=V$ on~$M'_0$,
it follows that $(-N,w_0)\in\Phi_\R^{V'}(\sigma_0)$.
Then $\Phi_\R^{V'}(-N,w_0)=\Phi_\R^{V'}(\sigma_0)$.
Then $\xi_1\in\Phi_\R^{V'}(-N,w_0)=\Phi_\R^{V'}(\sigma_0)$,
so it suffices to show that
$(V',\xi_1)$ is periodic to order $k$.
Since $T$~is an integer and $T\ne0$,
it suffices to show that $\Phi_T^{V'}$~agrees with $\Id_M$
to order $k$ at $\xi_1$.

By Claim 14, since $T=\Lambda(w_0)$ and since $\psi(w_0)=w_0$,
we conclude that $\Phi_T^{V'}(-N-6,w_0)=(-N-6,w_0)$.
So, because $\xi_1\in\Phi_\R^{V'}(-N-6,w_0)$,
we get $\Phi_T^{V'}(\xi_1)=\xi_1$.
Let $U'_1$ be an open neighborhood in $U_1$ of $\xi_1$
such that $\Phi_T^{V'}(U'_1)\subseteq U_1$.
Give $U'_1$ the topology $(\tau_\#)|(U'_1)=(\tau')|(U'_1)$
and the maximal $C^\omega$ atlas $(\scra_\#)|(U'_1)=(\scra')|(U'_1)$.
Let $\chi_1:=\Phi_T^{V'}|U'_1:U'_1\to U_1$ be
the restriction of $\Phi_T^{V'}$ to $U'_1$.
Let $\iota_1:U'_1\to U_1$ be the inclusion.
We wish to show that $\chi_1$ agrees with $\iota_1$
to order $k$ at $\xi_1$.

Let $U'_0:=\Gamma^{-1}(U'_1)$.
Then $U'_0$ is open in $U_0$.
Give $U'_0$ the topology $(\tau_\#)|(U'_0)$
and the maximal $C^\omega$ atlas $(\scra_\#)|(U'_0)$.
We have $\xi_0\in U'_0$.
Let $\Gamma':=\Gamma|U'_0:U'_0\to U'_1$ be
the restriction of  $\Gamma$ to $U'_0$.
Then $\Gamma':U'_0\to U'_1$ is a $C^\omega$ diffeomorphism.
Let $\chi_0:=[\Gamma^{-1}]\circ\chi_1\circ\Gamma':U'_0\to U_0$.

Define $F:\R\times W\to\R^3$ by
$$F(t,w)\quad=\quad(\,\,T\,-\,[\Lambda(w)]\,+\,t\,\,,\,\,\psi(w)\,\,).$$
Recall that $\xi_0=(0.5,w_0)$.
Recall that $\Lambda$ agrees with the constant~$T$ to order $k$ at~$w_0$
and that $\psi$ agrees with $\Id_{\R^2}$ to order $2k$ at $w_0$.
Then, using $\tau_\#|(\R\times W)$ and $\scra_\#|(\R\times W)$ on $\R\times W$,
and using $\tau_\#$ and $\scra_\#$ on $\R^3$,
it follows that $F:\R\times W\to\R^3$~agrees with the inclusion $\R\times W\to\R^3$
to order $k$ at $\xi_0$.
In particular, $F(\xi_0)=\xi_0$.
Let $U''_0:=[U'_0]\cap[F^{-1}(U_0)]$.
The $U''_0$ is an open neighborhood in $U'_0$ of $\xi_0$
and $F(U''_0)\subseteq U_0$.

{\it Claim 15:} $F|U''_0=\chi_0|U''_0$.
{\it Proof of Claim 15:}
Let $t\in\R$ and $w\in\R^2$
and assume that $(t,w)\in U''_0$.
We wish to prove that $F(t,w)=\chi_0(t,w)$.

Since $(t,w)\in U''_0\subseteq U'_0$,
it follows that $\Gamma'(t,w)\in U'_1$.
Also, since $(t,w)\in U''_0\subseteq F^{-1}(U_0)$,
it follows that $F(t,w)\in U_0$.
From the definitions of $\Gamma'$ and $\Gamma$, we conclude that
$\Gamma'(t,w)=\Gamma(t,w)=\Phi_t^{V'}(-N-6,w)$.
Because $(t,w)\in U'_0\subseteq U_0=(0,1)\times W$, we get $w\in W$.
So, by Claim~14,
$$\Phi_{\Lambda(w)}^{V'}\,(\,-N-6\,,\,w\,)\quad=\quad(\,-N-6\,,\,\psi(w)\,).$$
We apply $\Phi_t^{V'}$ to this last equation;
since $[\Phi_t^{V'}]\circ[\Phi_{\Lambda(w)}^{V'}]=[\Phi_{\Lambda(w)}^{V'}]\circ[\Phi_t^{V'}]$
and since $\Phi_t^{V'}(-N-6,w)=\Gamma'(t,w)$, we get
$$\Phi_{\Lambda(w)}^{V'}\,(\,\Gamma'(t,w)\,)\quad=\quad\Phi_t^{V'}(\,-N-6\,,\,\psi(w)\,).$$
Applying $\Phi_{T-[\Lambda(w)]}^{V'}$ to this last equation yields
$$\Phi_T^{V'}\,(\,\Gamma'(t,w)\,)\quad=\quad\Phi_{T\,-\,[\Lambda(w)]\,+\,t}^{V'}\,(\,-N-6\,,\,\psi(w)\,),$$
which, by definition of $\chi_1$ and $\Gamma$ and $F$, gives
$\chi_1(\Gamma'(t,w))=\Gamma(F(t,w))$.
Then $\chi_0(t,w)=([\Gamma^{-1}]\circ\chi_1\circ\Gamma')(t,w)=F(t,w)$, as desired.
{\it End of proof of Claim~15.}

Let $\iota_0:U'_0\to U_0$ be the inclusion.
As $F:\R\times W\to\R^3$~agrees
with the inclusion $\R\times W\to\R^3$
to order $k$ at $\xi_0$,
it follows, from Claim~15,
that $\chi_0$~agrees
with $\iota_0$
to order $k$ at $\xi_0$.
So, since $\chi_1=\Gamma\circ\chi_0\circ[(\Gamma')^{-1}]$
and since $\iota_1=\Gamma\circ\iota_0\circ[(\Gamma')^{-1}]$
and since $\Gamma'(\xi_0)=\Gamma(\xi_0)=\xi_1$,
it follows that $\chi_1$ agrees with~$\iota_1$
to order $k$ at $\xi_1$, as desired.
\end{proof}

\section{The counterexample\wrlab{sect-counterex}}

\begin{thm}\wrlab{thm-ctrx}
There exists a $C^\omega$ manifold $M$
and a complete $C^\omega$ vector field $V$ on $M$
and a sequence $\sigma_1,\sigma_2,\ldots$ in $M$
such that $\{\sigma_1,\sigma_2,\ldots\}$ is dense in $M$ and
such that,
\begin{itemize}
\item[]for all integers $k\ge1$, \qquad
$(V,\sigma_k)$ is periodic to order $k$.
\end{itemize}
\end{thm}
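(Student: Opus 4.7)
The strategy is to iterate \lref{lem-iteration-lem} starting from a trivial generically-flat displayed system and then pass to a direct limit.

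Take $D_0:=(1,\R^3,\tau_\#,\scra_\#,E)$ with $\theta\equiv 1$; every horizontal line is an orbit, so $\scrf(D_0)=\R^3$ and $D_0$ is generically flat. Suppose, inductively, that a generically-flat displayed system $D_{k-1}=(N_{k-1},M_{k-1},\tau_{k-1},\scra_{k-1},V_{k-1})$ has been built. Write $\Omega_n:=M_n\cap S_{N_n}$; the extension axioms imply that for all $m\ge n$, $\Omega_n$ is an open subset of $M_m$ on which $\tau_m,\scra_m,V_m$ restrict to $\tau_n,\scra_n,V_n$. Choose, once each stage is built, a countable base $\scrb_n$ for $\tau_n|\Omega_n$, and list $\bigcup_n\scrb_n$ as $U_1,U_2,\ldots$ by a diagonal scheme so that each $U_j\in\scrb_{n_j}$ satisfies $n_j<j$. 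Fix a surjection $\phi:\N\to\N$ with $\phi(k)\le k$ attaining every value infinitely often. At stage $k$, the set $U_{\phi(k)}$ lies in $\Omega_{n_{\phi(k)}}\subseteq\Omega_{k-1}$ and is $\tau_{k-1}$-open; by generic flatness together with \lref{lem-Q-facts}, \cref{cor-unambig-nw-dense}, and the Baire category theorem, the intersection $\scrf(D_{k-1})\cap U_{\phi(k)}$ is nonempty. Pick $\sigma_k$ there and invoke \lref{lem-iteration-lem} with inputs $D_{k-1},\sigma_k,k$ to obtain a generically-flat extension $D_k=(N_k,M_k,\tau_k,\scra_k,V_k)$ with $(V_k,\sigma_k)$ periodic to order $k$ and $\Phi_\R^{V_k}(\sigma_k)\subseteq S_{N_k}$.

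Form $M:=\bigcup_n\Omega_n\subseteq\R^3$ with the inductive-limit topology $\tau$ (a set is $\tau$-open iff its intersection with each $\Omega_n$ is $\tau_n|\Omega_n$-open) and the corresponding maximal $C^\omega$ atlas $\scra$; Hausdorffness, second countability, and locally-Euclidean-ness are inherited from the stages. Define $V$ on $M$ by $V|\Omega_n:=V_n|\Omega_n$; consistency is automatic. For completeness: locally $V=\theta E$ with $\theta\in[0,1]$, so every $V_m$-orbit has $|\pi(\mathrm{orbit}(t))-\pi(\mathrm{orbit}(0))|\le|t|$. Given $\sigma\in\Omega_n$ and $t\in\R$, pick any $m$ with $N_m>|\pi(\sigma)|+|t|$; then the $V_m$-orbit through $\sigma$ remains inside $\Omega_m$ throughout $[-|t|,|t|]$. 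Set $\Phi_t^V(\sigma):=\Phi_t^{V_m}(\sigma)$; independence of $m$ is forced by ODE uniqueness on the nested overlaps $\Omega_m\subseteq\Omega_{m'}$.

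By (d) of \lref{lem-iteration-lem}, $\Phi_\R^{V_k}(\sigma_k)\subseteq\Omega_k$ lies in a region where $V=V_k$, so the order-$k$ periodicity of $(V_k,\sigma_k)$ transfers to $(V,\sigma_k)$. Since $\phi$ attains each value infinitely often, $\sigma_k\in U_j$ for infinitely many $k$ for every $j$, making $\{\sigma_k\}_{k\ge1}$ a $\tau$-dense subset of $M$. The principal hurdle is the direct-limit step: the sets $M_n$ themselves are not nested (only their slices $\Omega_n$ are), so completeness of $V$ has to be extracted from the uniform estimate $|\pi(\mathrm{orbit}(t))|\le|\pi(\sigma)|+|t|$ combined with the exhaustion $\Omega_1\subseteq\Omega_2\subseteq\cdots$; meanwhile the diagonal choice of $\phi$ is what lets the prescribed periodicity orders coexist with the density requirement in a single enumeration.
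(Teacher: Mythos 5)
Your proposal is correct and follows the same skeleton as the paper's proof: start from the trivial displayed system on $\R^3$ with vector field $E$, iterate \lref{lem-iteration-lem} to obtain a nested chain of systems $D_0,D_1,\ldots$, and pass to the direct limit over the exhausting open slices $Q_k=M_k\cap S_{N_k}$. Where you diverge is in the bookkeeping used to make $\{\sigma_k\}$ dense. The paper fixes a countable $(\tau_\#)$-dense sequence $\omega_1,\omega_2,\ldots$ in $\R^3$ once and for all, tracks which indices have been used via the sets $J_{k-1}$, and at stage $k$ invokes \lref{lem-flatpt-density} to choose $\sigma_k\in\scrf(D_{k-1})\cap Q_{k-1}^\circ$ with $|\sigma_k-\omega_{j_k}|<1/k$; the density verification is then a small $\varepsilon$--$\delta$ argument that must reconcile Euclidean closeness with the nonstandard topology $\tau$, which the paper handles by first restricting to the interiors $Q_k^\circ$ where $\tau$ and $\tau_\#$ coincide. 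You instead build a countable base of $\tau$ incrementally from bases $\scrb_n$ of $\tau_n|\Omega_n$ and diagonalize, placing $\sigma_k$ directly inside a prescribed basic open set $U_{\phi(k)}$; this sidesteps the metric approximation entirely, since hitting every basic open set gives density by definition, and it only needs $\scrf(D_{k-1})$ comeager plus Baire rather than \lref{lem-flatpt-density}. You are also more explicit than the paper about completeness of the limiting field $V$, making transparent the $|\pi(\Phi_t^V(\sigma))|\le|\pi(\sigma)|+|t|$ estimate that lets each orbit be computed inside a single $\Omega_m$; the paper takes this for granted. Your route is arguably cleaner on those two points, at the cost of having to set up the dovetailed enumeration and the surjection $\phi$ carefully so that $U_{\phi(k)}$ is already defined when stage $k$ arrives; you do address this with the constraints $n_j<j$ and $\phi(k)\le k$.
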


\begin{proof}
Let $\{\omega_1,\omega_2,\ldots\}$ be a countable $(\tau_\#)$-dense subset of $\R^3$.
We denote the standard norm on $\R^3$ by $|\,\bullet\,|$.
Let $N_0:=1$.
Let $M_0:=\R^3$.
Let $\tau_0:=\tau_\#$.
Let $\scra_0:=\scra_\#$.
Let $V_0:=E$.
Let $D_0:=(N_0,M_0,\tau_0,\scra_0,V_0)$.
Then $D_0$ is a generically flat displayed system.

Let $J_0:=\N$.
Let $Q_0:=M_0\cap S_{N_0}$.
Then $Q_0=S_{N_0}=S_1$, so $Q_0^\circ=S_1$, and so we have $Q_0^\circ\ne\emptyset$.
Let $j_1:=\min\{j\in J_0\,|\,\omega_j\in Q_0^\circ\}$.
By \lref{lem-flatpt-density}, fix $\sigma_1\in[\scrf(D_0)]\cap Q_0^\circ$ such that
$|\sigma_1-\omega_{j_1}|<1$.
By \lref{lem-iteration-lem}, fix
\begin{itemize}
\item[]a generically flat displayed system $D_1:=(N_1, M_1,\tau_1,\scra_1,V_1)$
\end{itemize}
such that
\begin{itemize}
\item$D_1$ is an extension of $D_0$,
\item$(V_1,\sigma_1)$ is periodic to order $1$ \qquad and
\item$\Phi_\R^{V_1}(\sigma_1)\quad\subseteq\quad S_{N_1}$.
\end{itemize}
Let $J_1:=\N\backslash\{j_1\}$.
Let $Q_1:=M_1\cap S_{N_1}$.
By definition of extension, we have
$M_0\cap S_{N_0}=M_1\cap S_{N_0}$ and $N_0<N_1$.
Then
$$Q_0\quad=\quad M_0\cap S_{N_0}\quad=\quad M_1\cap S_{N_0}\quad\subseteq\quad M_1\cap S_{N_1}\quad=\quad Q_1,$$
so $Q_0^\circ\subseteq Q_1^\circ$.
So, since $Q_0^\circ\ne\emptyset$,
we conclude that $Q_1^\circ\ne\emptyset$.
Let $j_2:=\min\{j\in J_1\,|\,\omega_j\in Q_1^\circ\}$.
By \lref{lem-flatpt-density}, fix $\sigma_2\in[\scrf(D_1)]\cap Q_1^\circ$ such that
$|\sigma_2-\omega_{j_2}|<1/2$.
By \lref{lem-iteration-lem}, fix
\begin{itemize}
\item[]a generically flat displayed system $D_2=(N_2,M_2,\tau_2,\scra_2,V_2)$
\end{itemize}
such that
\begin{itemize}
\item$D_2$ is an extension of $D_1$,
\item$(V_2,\sigma_2)$ is periodic to order $2$ \qquad and
\item$\Phi_\R^{V_2}(\sigma_2)\quad\subseteq\quad S_{N_2}$.
\end{itemize}
Let $J_2:=\N\backslash\{j_1,j_2\}$.
Let $Q_2:=M_2\cap S_{N_2}$.
By definition of extension, we have
$M_1\cap S_{N_1}=M_2\cap S_{N_1}$ and $N_1<N_2$.
Then
$$Q_1\quad=\quad M_1\cap S_{N_1}\quad=\quad M_2\cap S_{N_1}\quad\subseteq\quad M_2\cap S_{N_2}\quad=\quad Q_2,$$
so $Q_1^\circ\subseteq Q_2^\circ$.
So, since $Q_1^\circ\ne\emptyset$,
we conclude that $Q_2^\circ\ne\emptyset$.
Let $j_3:=\min\{j\in J_2\,|\,\omega_j\in Q_2^\circ\}$.
By \lref{lem-flatpt-density}, fix $\sigma_3\in[\scrf(D_2)]\cap Q_2^\circ$ such that
$|\sigma_3-\omega_{j_3}|<1/3$.
By \lref{lem-iteration-lem}, fix
\begin{itemize}
\item[]a generically flat displayed system $D_3=(N_3,M_3,\tau_3,\scra_3,V_3)$
\end{itemize}
such that
\begin{itemize}
\item$D_3$ is an extension of $D_2$,
\item$(V_3,\sigma_3)$ is periodic to order $3$ \qquad and
\item$\Phi_\R^{V_3}(\sigma_3)\quad\subseteq\quad S_{N_3}$.
\end{itemize}
Continue {\it ad infinitum}, obtaining a sequence $D_0,D_1,D_2,\ldots$
of displayed systems and a sequence $\sigma_1,\sigma_2,\ldots$ in $\R^3$.
Also constructed are subsets $Q_0\subseteq Q_1\subseteq Q_2\subseteq\cdots$ of $\R^3$
and subsets $J_0\supseteq J_1\supseteq J_2\supseteq\cdots$ of $\N$.
Also constructed is a sequence $j_1,j_2,\ldots$ of distinct positive integers.

Let $M:=Q_0\cup Q_1\cup Q_2\cup\cdots$.
For all integers $k\ge0$, $D_k$ is a displayed system,
and so $Q_k$ is $\tau_k$-open in $M_k$.
Also, for all integers $k\ge0$, for all integers $k'\ge k+1$,
we know that $D_{k'}$ is an extension of $D_k$,
and it follows that
$Q_k$ is $\tau_{k'}$-open in $M_{k'}$, and, moreover, that
$$\tau_{k'}|Q_k=\tau_k|Q_k,\qquad\scra_{k'}|Q_k=\scra_k|Q_k\qquad\hbox{and}\qquad V_{k'}|Q_k=V_k|Q_k.$$
Let $\tau$ be the unique topology on $M$ such that,
for all integers $k\ge0$, we have $\tau|Q_k=\tau_k|Q_k$.
Let $\scra$ be the unique maximal $C^\omega$ atlas on $(M,\tau)$ such that,
for all integers $k\ge0$, we have $\scra|Q_k=\scra_k|Q_k$.
Give $M$ the topology $\tau$ and the
maximal $C^\omega$ atlas $\scra$.
Let $V$ be the unique $C^\omega$ vector field on $M$ such that,
for all integers $k\ge0$, we have $V|Q_k=V_k|Q_k$.

For all integers $k\ge0$,
$Q_k\in\tau_k|Q_k=\tau|Q_k\subseteq\tau$,
so $Q_k$ is open in~$M$.
For all integers $k\ge1$, $(V_k,\sigma_k)$ is periodic to order~$k$;
so, as $\Phi_\R^{V_k}(\sigma_k)\subseteq M_k\cap S_{N_k}=Q_k$
and as $V=V_k$ on~$Q_k$,
we see that $(V,\sigma_k)$~is periodic to order $k$.
Let $\Sigma:=\{\sigma_1,\sigma_2,\ldots\}$.
It remains to show: $\Sigma$ is dense in~$M$.
Since $M=Q_1\cup Q_2\cup\cdots$,
it suffices to show, for all integers $k\ge1$,
that $\Sigma\cap Q_k$ is $\tau$-dense in $Q_k$.
Fix an integer $k\ge1$.
Since $\tau_k|Q_k=\tau|Q_k$,
we wish to prove that
$\Sigma\cap Q_k$ is $\tau_k$-dense in $Q_k$.

Let $\Sigma':=\Sigma\cap Q_k^\circ$.
By \lref{lem-Q-facts}(ii),
$Q_k^\circ$ is $\tau_k$-dense in $Q_k$,
and it therefore suffices to prove that $\Sigma'$ is $\tau_k$-dense in $Q_k^\circ$.
By \lref{lem-Q-facts}(iv), $\tau_k|(Q_k^\circ)=(\tau_\#)|(Q_k^\circ)$,
so it suffices to show: $\Sigma'$ is $(\tau_\#)$-dense in $Q_k^\circ$.
Fix a nonempty $(\tau_\#)$-open subset $U$ of $Q_k^\circ$.
We wish to prove that $\Sigma'\cap U\ne\emptyset$.

Fix a nonempty $(\tau_\#)$-open subset $U_0$ of $U$ 
and fix $\varepsilon>0$ such that,
\begin{itemize}
\item[$(*)$]for all $\omega\in U_0$,
for all $\sigma\in\R^3$, \qquad
if $|\sigma-\omega|<\varepsilon$, \,\, then $\sigma\in U$.
\end{itemize}
Fix an integer $m_0\ge k$ such that $1/m_0<\varepsilon$.
Let $j':=\max\{j_1,\ldots,j_{m_0}\}$.
By density of $\{\omega_{j'+1},\omega_{j'+2},\ldots\}$ in $\R^3$,
fix an integer $j_*\ge j'+1$
such that $\omega_{j_*}\in U_0$.
Since $j_*>j'$, we have $j_*\notin\{j_1,\ldots,j_{m_0}\}$.

Let $n:=\min\{m\in\N\,|\,(m\ge m_0+1)\hbox{ and }(j_m\ge j_*)\}$.
Then we have $n\ge m_0+1$ and $j_n\ge j_*$.
Also, by minimality of $n$, we conclude that $j_*\notin\{j_{m_0+1},\ldots,j_{n-1}\}$.
So, since $j_*\notin\{j_1,\ldots,j_{m_0}\}$,
it follows that $j_*\in\N\backslash\{j_1,\ldots,j_{n-1}\}$.
That is, $j_*\in J_{n-1}$.
Since $k\le m_0\le n-1$, we have $Q_k\subseteq Q_{n-1}$,
so $Q_k^\circ\subseteq Q_{n-1}^\circ$.
Then
$$\omega_{j_*}\quad\in\quad U_0\quad\subseteq\quad
U\quad\subseteq\quad Q_k^\circ\quad\subseteq\quad Q_{n-1}^\circ.$$
Let $J':=\{j\in J_{n-1}\,|\,\omega_j\in Q_{n-1}^\circ\}$.
By definition of $j_n$, $j_n=\min J'$.
So, since $j_*\in J'$, we get $j_n\le j_*$.
So, as $j_n\ge j_*$, we get $j_n=j_*$.
Then $\omega_{j_n}=\omega_{j_*}\in U_0$.
So, as
$|\sigma_n-\omega_{j_n}|<1/n<1/m_0<\varepsilon$,
we conclude, from~$(*)$, that $\sigma_n\in U$.
So, as $U\subseteq Q_k^\circ$, we get $\sigma_n\in Q_k^\circ$.
So, since $\sigma_n\in\Sigma$, we get $\sigma_n\in\Sigma\cap Q_k^\circ=\Sigma'$.
Then $\sigma_n\in\Sigma'\cap U$,
and so $\Sigma'\cap U\ne\emptyset$.
\end{proof}

%%%%%%%%%%%%%%%%%%%%%%%%%%%%%%%%%%%%%%%%%%%%%%%%%

\bibliography{list}

\eject
\centerline{\includegraphics[scale=.7]{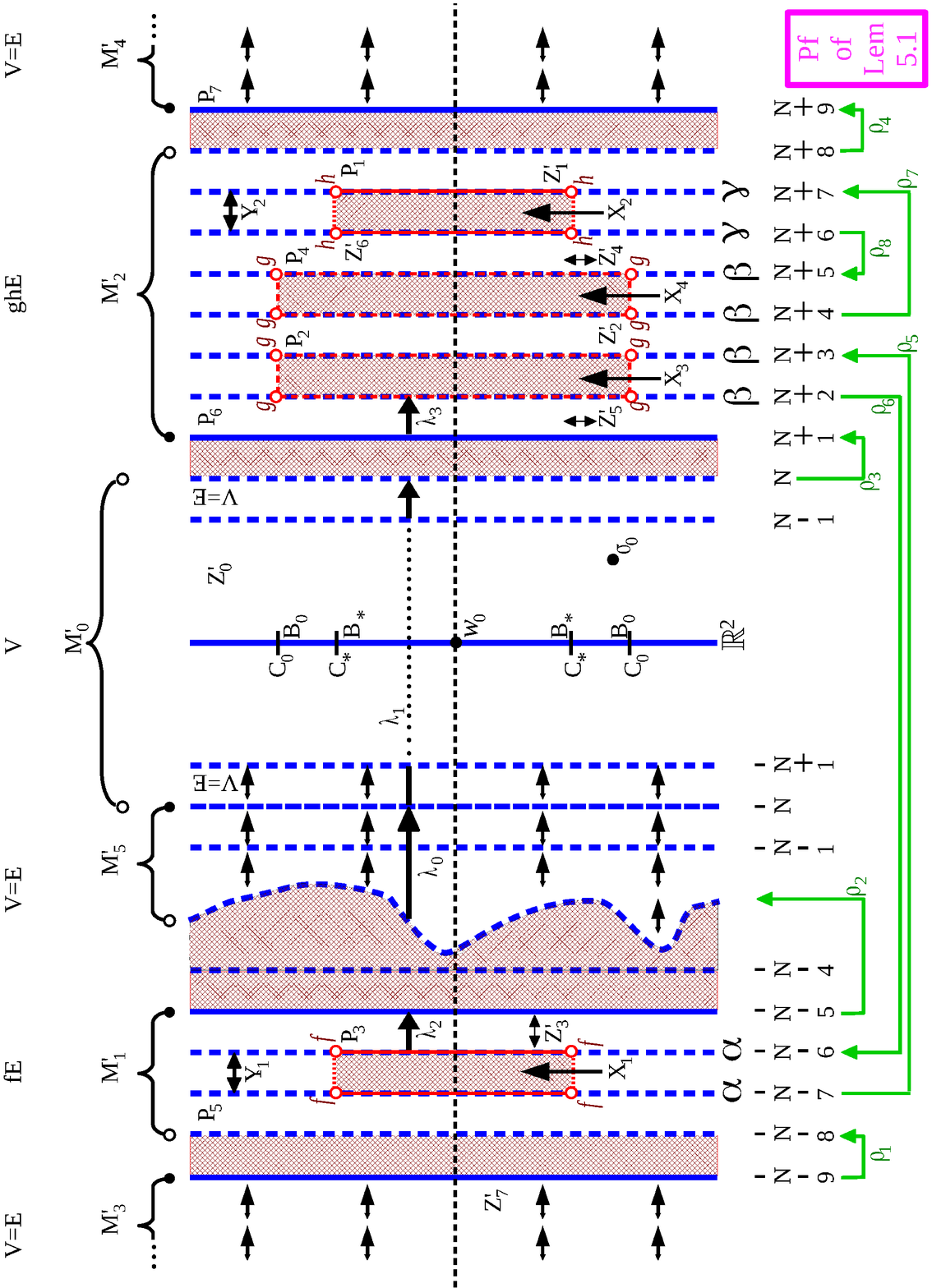}}

\end{document}